 \tikzset{
  on each segment/.style={
    decorate,
    decoration={
      show path construction,
      moveto code={},
      lineto code={
        \path [#1]
        (\tikzinputsegmentfirst) -- (\tikzinputsegmentlast);
      },
      curveto code={
        \path [#1] (\tikzinputsegmentfirst)
        .. controls
        (\tikzinputsegmentsupporta) and (\tikzinputsegmentsupportb)
        ..
        (\tikzinputsegmentlast);
      },
      closepath code={
        \path [#1]
        (\tikzinputsegmentfirst) -- (\tikzinputsegmentlast);
      },
    },
  },
  mid arrow/.style={postaction={decorate,decoration={
        markings,
        mark=at position 0.6 with {\arrow[#1]{stealth}} 
      }}},
}
\def\headertitle{Homological Bounds of Gentle algebras}
\def\fstpage{1} 
\def\page{$\begin{matrix} {\color{white}0} \\ \thepage \end{matrix}$} 
\numberwithin{figure}{section}
\setlist[itemize]{leftmargin=35pt}
\setlist[enumerate]{leftmargin=35pt}
\newtheorem{theorem}{Theorem}[section]
\newtheorem{lemma}[theorem]{Lemma}
\newtheorem{corollary}[theorem]{Corollary}
\newtheorem{main theorem}[theorem]{Main Theorem}
\newtheorem{proposition}[theorem]{Proposition}
\newtheorem{definition}[theorem]{Definition}
\newtheorem{remark}[theorem]{Remark}
\newtheorem{example}[theorem]{Example}
\newtheorem{question}[theorem]{Question}
\numberwithin{equation}{section}
\def\orcid{
\begin{tikzpicture}[baseline=-1mm]
\filldraw[Green!35] (0,0) circle (5pt);
\filldraw[white] (0,0) node{\tiny\textbf{iD}};
\end{tikzpicture}
}
\def\orcid{
\begin{tikzpicture}[baseline=-1mm]
\filldraw[Green!35] (0,0) circle (5pt);
\filldraw[white] (0,0) node{\tiny\textbf{iD}};
\end{tikzpicture}
}
\newcommand{\ORCID}[1]{ORCID: \href{https://orcid.org/#1}{#1}}
\newcommand{\ORCIDNOTATION}[1]{\href{https://orcid.org/#1}{\orcid}}
\def\EnglishTitle{Homological Bounds of Gentle algebras}
\def\EnglishFundings{
Yu-Zhe Liu is supported by
the National Natural Science Foundation of China (Grant Nos. 12561008, 12401042, and 12171207);
Guizhou Provincial Basic Research Program (Natural Science) (Grant Nos. ZD[2025]085 and ZK[2024]YiBan066);
and Scientific Research Foundation of Guizhou University (Grant Nos. [2022]53, [2022]65, [2023]16). \\
{\color{white}......}
Xin Ma is supported by
Central Plains Science and Technology Innovation Youth Top-notch Talent and Henan University of Engineering (DKJ2019010). \\
{\color{white}......}
Chao Zhang is supported by
the National Natural Science Foundation of China (Grant No. 12461006);
and Guizhou Provincial Basic Research Program (Natural Science) (Grant No. ZD[2025]085).
}
\def\FirstAuthorORICD{
0009-0005-1110-386X}
\def\SecondAuthorORICD{
0009-0007-5987-9848}
\def\ThirdAuthorORICD{
0009-0007-7780-0160}
\def\PaperAuthorsENname{
Yu-Zhe Liu
$^{\ref{Author1}, \ORCIDNOTATION{\FirstAuthorORICD}\ref{orcid1}}$,
Xin Ma
$^{\ref{Author2}, \ORCIDNOTATION{\SecondAuthorORICD}\ref{orcid2},~\ref{CorrespondingAuthor}}$,
Jiacheng Xu
$^{\ref{Author1}, \ORCIDNOTATION{\ThirdAuthorORICD}\ref{orcid3}}$,
Chao Zhang
$^{\ref{Author1}, ~\ref{CorrespondingAuthor}}$
}
\def\FirstEnOrgani{School of Mathematics and Statistics, Guizhou University, Guiyang 550025, Guizhou, China}
\def\SecondEnOrgani{College of Science, Henan University of Engineering, Zhengzhou 451191, Henan, China}
\def\FirstEmail{\url{yzliu3@163.com}/\url{liuyz@gzu.edu.cn} (Y.-Z. Liu); \\ \url{xjcgzu823@163.com}/\url{3191970976@qq.com} (J. Xu); \\ \url{zhangc@amss.ac.cn} (C. Zhang)}
\def\SecondEmail{\url{maxin@haue.edu.cn}}
\def\NN{\mathbb{N}} 
\def\ZZ{\mathbb{Z}} 
\newcommand{\Ima}{\operatorname{Im}}
\newcommand{\Pic}{Figure\ }
\newcommand{\modcat}{\mathsf{mod}}
\newcommand{\ind}{\mathsf{ind}}
\def\kk{\Bbbk} 
\def\Q{\mathcal{Q}} 
\def\I{\mathcal{I}}
\def\top{\mathrm{top}}
\def\soc{\mathrm{soc}}
\def\HH{\mathrm{H}}
\newcommand{\e}{\varepsilon}
\newcommand{\Ext}{\mathrm{Ext}} %
\newcommand{\Tor}{\mathrm{Tor}} %
\newcommand{\w}[1]{\widetilde{#1}}
\newcommand{\To}[1]{\mathop{-\!\!\!-\!\!\!\longrightarrow}\limits^{#1}}
\def\alg{\mathit{\Lambda}}
\def\heart{{\color{red}\pmb{\heartsuit}}}
\def\compos{\ \lower-0.2ex\hbox{\tikz\draw (0pt, 0pt) circle (.1em);} \ }
\newcommand{\defines}{\it\color{red}}
\title{\bf \EnglishTitle$^{\color{red}\dag}$
\footnotetext[2]{ \tiny \EnglishFundings}
}
\author{\PaperAuthorsENname}
\date{ }
\begin{document}



\thispagestyle{empty}

\maketitle

\begin{enumerate}[label=\textbf{\color{red}$\ddag$}]
  \item \footnotesize
    \begin{center}
      Corresponding author
    \end{center} \label{CorrespondingAuthor}
\end{enumerate}


\begin{enumerate}[leftmargin=6.8cm] \footnotesize
  \item[\orcid]
      \ORCID{\FirstAuthorORICD}
      \label{orcid1} 
  \item[\orcid]
      \ORCID{\SecondAuthorORICD}
      \label{orcid2} 
  \item[\orcid]
      \ORCID{\ThirdAuthorORICD}
      \label{orcid3} 
\end{enumerate}

\vspace{2mm}
\begin{enumerate}[label=\textbf{\color{red}\arabic*}] \footnotesize
  \item
    \begin{center}
      \FirstEnOrgani

      E-mail: \FirstEmail
    \end{center}
    \label{Author1}

  \item
    \begin{center}
      \SecondEnOrgani

      E-mail: \SecondEmail
    \end{center}
    \label{Author2}

%
%
%
%
\end{enumerate}




\vspace{1mm}


\begin{adjustwidth}{1cm}{1cm}
  \noindent \footnotesize
  \textbf{Abstract}:
This paper studies the homological bounds of gentle algebras, i.e., the upper bounds for the sum of the projective and injective dimensions of indecomposable modules over gentle algebras. We provide conditions under which this sum is strictly less than twice the global dimension, and as an application, we give a characterization of quasi-tilted gentle algebras.
\vspace{1mm}

  \noindent
    \textbf{2020 Mathematics Subject Classification}:
16G10; 
16G20; 
16E05; 
16E10; 
     \label{2020MSC}

\vspace{1mm}

  \noindent
    \textbf{Keywords}:
     finite-dimensional algebras; quasi-tilted algebras; global dimensions; finitistic dimensions.
     \label{Keywords}
\end{adjustwidth}



\def\la{\langle} 
\def\ra{\rangle} 
\def\lala{\langle\!\langle}
\def\rara{\rangle\!\rangle}
\def\=<{\leqslant}
\def\>={\geqslant}
\def\s{\mathfrak{s}}
\def\t{\mathfrak{t}}
\def\C{\mathscr{C}}
\def\str{\mathbf{s}}
\def\band{\mathbf{b}}
\def\Str{\mathsf{Str}}
\def\Band{\mathsf{Ban}}
\def\M{\mathds{M}}
\def\cls{\mathrm{cls}}

\def\pdim{\mathrm{proj.dim}}
\def\idim{\mathrm{inj.dim}}
\def\gldim{\mathrm{gl.dim}}
\def\fdim{\mathrm{f.dim}}
\def\Hbound{\mathrm{hb.dim}}

\def\rmI{\mathrm{I}}
\def\rmII{\mathrm{II}}
\def\rmIII{\mathrm{III}}
\def\rmIV{\mathrm{IV}}
\def\Left{\mathrm{L}}
\def\Right{\mathrm{R}}

\def\bsm{\begin{smallmatrix}}
\def\esm{\end{smallmatrix}}

\section{Introduction}

In the representation theory of finite-dimensional algebras, the global dimension, denoted as $\gldim$, is an important homological invariant that measures the homological complexity of the module category of an algebra, see for example \cite[etc]{CE1956,R1979}. Intuitively, it defines the supremum of the projective dimensions of all modules, thereby reflecting how “far” the algebra is from being semisimple (which corresponds to global dimension zero). Whether the global dimension of an algebra is finite or not directly governs the structure of its derived category and the complexity of computing its $\Ext$ and $\Tor$ functors. Therefore, characterizing the global dimension of various classes of algebras and studying their homological properties constitutes a fundamental pursuit in representation theory.

Gentle algebras, a special class of string algebras, have become a focal point of research due to their clear combinatorial definition and rich homological behavior. They were first introduced by Assem and Skowro\'{n}ski in the study of derived equivalences \cite{AS1987}, and owing to the foundational work of Wald, Waschb\"{u}sch, Butler and Ringel, their indecomposable modules can be completely classified as those constructed from strings and bands \cite{WW1985,BR1987}. More profoundly, Gei\ss~ and Reiten proved that all gentle algebras are Gorenstein algebras in \cite{GR2005}, meaning their self-injective dimension is also finite. This establishes a close connection between their finitistic dimension and the lengths of maximal forbidden paths.
Regarding the global dimension of gentle algebras, an elegant and profound characterization has been established: A gentle algebra has finite global dimension if and only if its bound quiver contains no ``forbidden cycles'' \cite[etc]{CPS2020, LGH2024, FOZ2024, Chang2025}, and the value of its global dimension is precisely equal to the supremum of the lengths of all forbidden paths in that quiver. This result tightly links the homological property of the algebra (global dimension) with its combinatorial structure (forbidden paths).

A fundamental objective in the representation theory of finite-dimensional algebras is the classification and understanding of algebras with well-behaved homological properties. Among these, quasi-tilted algebras, introduced by Happel, Reiten, and Smal{\o}, form a particularly important class. Here, an algebra $A$ is {\defines quasi-tilted} if its global dimension is at most 2 and, crucially, every indecomposable module $M$ satisfies either $\pdim M \=< 1$ or $\idim M \=< 1$ \cite{HRS1994}.
This homological condition implies a deep structural symmetry in its derived category, placing it between the classes of tilted and hereditary algebras. Therefore,
{\it finding effective criteria to determine whether a given algebra is quasi-tilted remains a significant problem.}
Huard and Liu first provide a sufficient and necessary condition for string algebra to be quasi-tilted by using a special quiver $\widetilde{\mathbb{A}}_{n,r,t}$, see \cite[Definition 3.5 and Theorem 3.6]{HL2000}. Later, Coelho and Tosar provide another description for gentle algebra to be quasi-tilted by using the cohomological widths of all indecomposable objects in the derived category, see \cite[Theorem 7]{CT2009}.
In this paper, we will consider the following question from {\it the perspective of quivers and combinations}.

\begin{question}
Let $A$ be a non-hereditary gentle algebra. For all indecomposable right $A$-modules $M$ with finite projective dimension and finite injective dimension, under what conditions does
\begin{align}\label{quest}
  \Hbound A := \sup_{ \bsm M\in\ind(\modcat A)  \\ \idim M + \pdim M < \infty \esm } (\pdim M+\idim M) \=< 2\cdot \gldim A -1
\end{align}
hold? {\rm(}Here, $\modcat A$ is the finitely generated right $A$-module category and $\ind(\modcat A)$ is the set of all isoclasses of indecomposable right $A$-module, and $\Hbound A$ is called the {\defines homological bound} of $A$.{\rm)}
\end{question}
\noindent Clearly, when $\gldim A=2$, (\ref{quest}) provides a characterization of the quasi-tilted property for non-hereditary gentle algebras since for each non-projective and non-injective indecomposable module $M$, $\pdim M + \idim M \=<3$ follows one of $\pdim M \=<1$ and $\idim M \=<1$ holds.

The main results of this paper are as follows:

\begin{enumerate}[label={\rm(\arabic*)}]
  \item We establish the following theorem.

\begin{theorem}
If the bound quiver $(\Q,\I)$ of a gentle algebra $A$ satisfies a certain local combinatorial condition —— namely, that the starting (or ending) vertices of all maximal forbidden paths of length $\>=2$ are strong sources (or strong sinks, see Section \ref{sect:pdim+idim}) —— then:
\begin{itemize}
  \item[\rm(a)]{\rm(Theorem \ref{thm:main 1})}
    the desired optimized bound (\ref{quest}), i.e.,
    \[ \Hbound A = \sup_{M\in\ind(\modcat A)} (\pdim M+\idim M) \=< 2\cdot \gldim A -1 \]
    holds in the case for $\gldim A < \infty$;
  \item[\rm(b)]{\rm(Theorem \ref{thm:main 2})}
    the desired optimized bound (\ref{quest}) in a finitistic dimensional version, i.e.,
    \[ \Hbound A = \sup_{\bsm M\in\ind(\modcat A)  \\ \idim M + \pdim M < \infty \esm} (\idim M + \pdim M) \=< 2\cdot \fdim A -1. \]
    holds for the case for $\gldim A = \infty$. {\rm(}Here, $\fdim A$ is the finitistic dimensional of $A$.{\rm)}
\end{itemize}
\end{theorem}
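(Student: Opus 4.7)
The plan is to reduce the problem to a combinatorial analysis of string modules, and then rule out the equality $\pdim M + \idim M = 2\gldim A$ (respectively $2\fdim A$) by exploiting the strong source/strong sink hypothesis at the starting and ending vertices of maximal forbidden paths of length $\geq 2$.

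First I would recall that for a gentle algebra $A$, the indecomposable finitely generated modules split into string modules $M(w)$ and band modules, and the band modules have both infinite projective and infinite injective dimension; they therefore do not contribute to $\Hbound A$ in either formulation. For a string module $M(w)$, the minimal projective resolution is controlled by the maximal forbidden paths obtained by ``growing'' the direct substring at each endpoint of $w$, and dually for the minimal injective coresolution. In particular, there are two maximal forbidden paths $p_{\rm L}(w)$ and $p_{\rm R}(w)$ associated to $w$ whose lengths realize $\pdim M(w)$, and two dual maximal forbidden paths $q_{\rm L}(w)$, $q_{\rm R}(w)$ realizing $\idim M(w)$. The previously recalled characterization of $\gldim A$ (respectively $\fdim A$) as the supremum of the lengths of the relevant maximal forbidden paths provides the upper bounds $\pdim M(w) \=< \gldim A$ and $\idim M(w) \=< \gldim A$ (respectively with $\fdim A$).

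Next I would argue by contradiction. Suppose some indecomposable $M$ (with $\pdim M + \idim M < \infty$ in case (b)) satisfies $\pdim M + \idim M \>= 2\gldim A$ (respectively $2\fdim A$). Then $M = M(w)$ is a string module, both summands must equal $\gldim A$ (respectively $\fdim A$), and this common value is witnessed by maximal forbidden paths of maximal length attached at the appropriate endpoints of $w$. Since $\gldim A \>= 2$ is the non-trivial range (the case $\gldim A \=< 1$ is immediate, as $A$ is then hereditary and excluded from the question), these witnessing forbidden paths have length $\>= 2$, so the hypothesis on strong sources/sinks applies to their endpoints.

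The crucial step, and the one I expect to be the main obstacle, is then the combinatorial clash: I would show that the strong source condition at the starting vertex of the forbidden path witnessing $\pdim M = \gldim A$ pins down the local shape of $w$ at that endpoint so rigidly that the string $w$ cannot simultaneously achieve the injective configuration required to realize $\idim M = \gldim A$ at its other endpoint. Concretely, I would track the iterated syzygies of $M(w)$ along $p_{\rm L}(w)$ or $p_{\rm R}(w)$ and compare with the iterated cosyzygies along $q_{\rm L}(w)$ or $q_{\rm R}(w)$, and use the strong source/sink condition to show that the combinatorial "gluing" of a maximal forbidden path at one end is incompatible with the presence of a dual maximal forbidden coresolution at the other end. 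This contradicts the assumed equality and gives $\Hbound A \=< 2\gldim A - 1$.

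Finally, for part (b), the argument is the same with $\fdim A$ replacing $\gldim A$: the finitistic dimension of a gentle algebra is still realized combinatorially by certain maximal forbidden paths, and the restriction $\pdim M + \idim M < \infty$ ensures that any potential extremal $M$ is a string module whose associated forbidden paths belong to the sub-collection governing $\fdim A$. The strong source/sink hypothesis is imposed on precisely these paths, so the contradiction argument of the previous paragraph goes through verbatim, yielding $\Hbound A \=< 2\fdim A - 1$.
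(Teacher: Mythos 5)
Your overall framework---reducing to string modules and reading off $\pdim$ and $\idim$ from the lengths of the maximal forbidden paths attached at the two endpoints of the string---is the same as the paper's, but the proposal has two genuine problems. The first is factual: band modules over a gentle algebra do \emph{not} have infinite projective and injective dimension. On the contrary, every band module $B(n,\lambda)$ satisfies $\pdim B(n,\lambda)=\idim B(n,\lambda)=1$ (Proposition \ref{prop:band}, proved by computing the projective cover of the quasi-simple band module and inducting along $0\to B(1,\lambda)\to B(n,\lambda)\to B(n-1,\lambda)\to 0$). So band modules do contribute to $\Hbound A$, namely the value $2\leqslant 2\cdot\gldim A-1$; the conclusion survives, but your reason for discarding them is wrong, and had it been right it would already contradict $\gldim A<\infty$ in part (a).

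The second, more serious, problem is that the step you yourself identify as ``the main obstacle'' is precisely the step you do not carry out: you assert that the strong-source hypothesis makes the two extremal configurations ``incompatible'' but give no mechanism. The paper's mechanism is quantitative rather than a clash between the two endpoints of $w$. If the starting vertex of every maximal forbidden path $F=f_1\cdots f_\ell$ of length $\geqslant 2$ is a strong source, then a string can only meet $F$ at a vertex $\t(f_i)$ with $i\geqslant 1$, never at $\s(F)$ itself; hence at that endpoint the right-maximal forbidden path has length $d=\ell-i\leqslant \ell-1\leqslant \gldim A-1$ while the left-maximal one has length $u=i$. Since $\pdim\M(\str)=\max\{d_{\Left},d_{\Right}\}$ and $\idim\M(\str)=\max\{u_{\Left},u_{\Right}\}$, the sum is the maximum of the four cross terms $u_X+d_Y$, and each is bounded by $2\cdot\gldim A-1$ (for instance $u_{\Left}+d_{\Left}=\ell\leqslant\gldim A$ and $u_{\Right}+d_{\Left}\leqslant l+\ell-1$ with $\ell,l\leqslant\gldim A$). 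In other words the hypothesis simply forces $\pdim\M(\str)\leqslant\gldim A-1$ in the relevant case, which is what your contradiction argument would need; without this computation the proof is not there. You also omit the remaining cases the paper must treat separately (length-zero strings sitting on a maximal forbidden path, where $\pdim S(v)+\idim S(v)=\ell\leqslant\gldim A$; inner arrows of forbidden paths; strings disjoint from all relations), and in part (b) the additional point that strings whose endpoints lie on a forbidden cycle have $\pdim+\idim=\infty$ and are excluded by the finiteness constraint, while all other attached maximal forbidden paths have length $\leqslant\fdim A$ by the Gei{\ss}--Reiten description.
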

  \item For $\gldim A = 2$, the above results simplifies to $\pdim M + \idim M \=< 3$ for all indecomposable right $A$-module $M$, providing a sufficient condition for $A$ to be quasi-tilted (see Corollaries \ref{coro:quasitilted} and \ref{coro:quasitilted}).
  \item For any gentle algebra $A$ with $\gldim A = 2$, we establish a corollary that provide a sufficient and necessary condition for $A$ to be quasi-tilted by using the bound quiver of $A$:

\begin{corollary}[{\rm Corollary \ref{coro:main 3}}]
Let $A=\kk\Q/\I$ be a gentle algebra with $\gldim A = 2$. Then $A$ is quasi-tilted if and only if all strings in its bound quiver $(\Q,\I)$ satisfy one of the four conditions \ref{QT 1}, \ref{QT 2}, \ref{QT 3} and \ref{QT 4}.
\end{corollary}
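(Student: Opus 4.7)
The plan is to reduce the corollary to a case analysis on indecomposable modules using the combinatorial description of $\pdim$ and $\idim$ of string and band modules over gentle algebras. By the Butler--Ringel classification \cite{BR1987}, every indecomposable $A$-module is either a string module $M(w)$ or a band module, and for a gentle algebra both $\pdim M(w)$ and $\idim M(w)$ can be read off from the forbidden paths that attach to $w$ at its top substrings and its socle substrings, respectively. Since $\gldim A=2$, the only forbidden paths that can contribute nontrivially are those of length $2$, so the entire problem reduces to understanding, for each string $w$, whether length-$2$ forbidden paths occur simultaneously at the ``top'' end and at the ``socle'' end of $w$.

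For the sufficiency direction ($\Leftarrow$), I would observe that the four conditions \ref{QT 1}--\ref{QT 4} are precisely the local combinatorial configurations which, on every maximal forbidden path of length $\geq 2$, ensure that its starting vertex is a strong source or its ending vertex is a strong sink in the sense of Section \ref{sect:pdim+idim}. Once this verification is made, Theorem \ref{thm:main 1} applies and yields
\[ \pdim M + \idim M \leq 2\gldim A - 1 = 3 \]
for every indecomposable $M$. Since $\pdim M,\idim M \leq 2$, this forces $\min\{\pdim M, \idim M\} \leq 1$ whenever $M$ is neither projective nor injective, which is exactly the definition of quasi-tiltedness, as already recorded in Corollary \ref{coro:quasitilted}.

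For the necessity direction ($\Rightarrow$), I would proceed by contraposition: assume some string $w$ in $(\Q,\I)$ violates all four of \ref{QT 1}--\ref{QT 4}. The goal is to exhibit, from the data of $w$, an indecomposable string module $M$ (either $M(w)$ itself or a canonical substring module determined by $w$) with $\pdim M=2$ and $\idim M=2$, thereby contradicting $A$ being quasi-tilted. The simultaneous failure of the four conditions should be translated, via the gentle axioms, into the coexistence of a length-$2$ forbidden relation hooking onto a top substring of $w$ and a dually positioned length-$2$ relation hooking onto a socle substring of $w$; the first contributes a nontrivial second syzygy to the projective resolution of $M$, and the second contributes a nontrivial second cosyzygy to the injective coresolution, yielding $\pdim M=\idim M=2$.

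The main technical obstacle will be the necessity direction, specifically verifying that the four conditions \ref{QT 1}--\ref{QT 4} are exhaustive. One must enumerate all local configurations at the two endpoints of an arbitrary string and check that any configuration not covered by (QT 1)--(QT 4) genuinely produces both a projective resolution and an injective coresolution of length $2$. The delicate cases will be those in which the top and socle substrings of $w$ overlap, share an arrow, or sit on the same maximal forbidden path; here one must argue carefully that truncation to the substring witnessing the obstruction preserves both dimensions simultaneously. The combinatorial formulas for $\pdim M(w)$ and $\idim M(w)$ together with the absence of any relations of length $\geq 3$ under $\gldim A=2$ should make this case analysis finite and tractable.
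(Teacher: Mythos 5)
Your necessity direction is essentially the paper's route (its Lemma \ref{lemm:QT} classifies the possible shapes of a string as in \Pic \ref{fig:QT str} and uses Proposition \ref{prop:idim/pdim} to force the attached forbidden paths to be short), so that half of the plan is sound, if still only a sketch. The genuine gap is in your sufficiency direction. You claim that conditions \ref{QT 1}--\ref{QT 4} holding for all strings ``are precisely the local combinatorial configurations'' that make the starting vertex of every maximal forbidden path of length $\>=2$ a strong source or its ending vertex a strong sink, and you then invoke Theorem \ref{thm:main 1}. That equivalence is false, and the paper's own Example \ref{examp:3}(1) refutes it: for $\Q = 1 \To{a} 2 \To{b} 3 \To{c} 4 \To{d} 5$ with $\I=\langle bc\rangle$, the maximal forbidden path $bc$ has length $2$, its source $2$ is not a source of $\Q$ and its target $4$ is not a sink of $\Q$, yet the algebra is quasi-tilted with $\gldim A = 2$, so by the ``only if'' direction every string does satisfy one of \ref{QT 1}--\ref{QT 4}. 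In other words, \ref{QT 1}--\ref{QT 4} are strictly weaker than the hypotheses of Theorem \ref{thm:main 1}; this is exactly why the paper presents Corollary \ref{coro:quasitilted} as merely sufficient and needs Corollary \ref{coro:main 3} as a separate, sharper statement. Your reduction therefore cannot close the sufficiency direction.

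What the paper actually does for sufficiency is a direct case analysis on which of \ref{QT 1}--\ref{QT 4} a given string $\str=a_1\cdots a_m$ satisfies. For \ref{QT 1} it gets $\pdim\M(\str)=\max\{\ell(F_1),\ell(F_2)\}\=<2$ from Proposition \ref{prop:idim/pdim} and then shows $\idim\M(\str)\=<1$ by contradiction: an extra arrow $\alpha$ into $\s(\str)$ or $\beta$ into $\t(\str)$ would, by the gentle axioms \ref{G2}--\ref{G3} and maximality of $F_1,F_2$, force $\alpha a_1\in\I$ and $\beta a_m^{-1}\in\I$, making $\M(\str)$ injective. For \ref{QT 3} it shows $\idim\M(\str)=2$ and then rules out $\pdim\M(\str)\>=2$ by locating the unique candidate forbidden path $F'$ at $\t(\str)$ and using \ref{QT 3} to force $\ell(F')=1$. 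You would need to supply arguments of this kind for each condition rather than routing through the strong-source/strong-sink theorem.
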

Here, conditions \ref{QT 1}, \ref{QT 2}, \ref{QT 3} and \ref{QT 4} are written in the subsection \ref{subsect:suff and necess} of Section \ref{sect:QT}, they are the special cases of the pictures (1), (2), (3), and (4) shown in \Pic \ref{fig:QT str}, respectively.
\end{enumerate}

Finally, we also provided some important examples in this paper, see Section \ref{sect:examp}.

\section{Gentle algebras}

Let $\Q=(\Q_0,\Q_1,\s,\t)$ be a finite quiver, where $\Q_0$ and $\Q_1$ are vertex set and arrow set, respectively,
and $\s,\t:\Q_1\to \Q_0$ are functions sending each arrow $a\in\Q_1$ to its starting point and ending point, respectively.
In this paper, we put a composition of two arrows $a,b\in\Q_1$ is $ab$ if $\t(a)=\s(b)$, cf. \cite[Chap II]{ASS2006}.

\subsection{Gentle pairs and gentle algebras}

A bound quiver $(\Q, \I)$, i.e., a pair of quiver $\Q$ and an admissible ideal $\I$ of $\kk\Q$,
is called a {\defines gentle pair} if:
\begin{enumerate}
[label=(G\arabic*)]
  \item for each vertex in $\Q_0$, it is the source of at most two arrows and the target of at most two arrows; \label{G1}

  \item for each arrow $a\in\Q_1$, there is at most one arrow $b\in\Q_1$ such that $ab\notin\I$,
    and there is at most one arrow $c$ such that $ca\notin\I$; \label{G2}

  \item for each arrow $a\in\Q_1$, there is at most one arrow $b\in\Q_1$ such that $ab\in\I$,
    and there is at most one arrow $c$ such that $ca\in\I$; \label{G3}

  \item and $\I$ is generated by some paths of length two. \label{G4}
\end{enumerate}

\begin{definition} \rm
A {\defines gentle algebra} is a finite-dimensional algebra $A=\kk\Q/\I$ whose bound quiver $(\Q,\I)$ is a gentle pair.
\end{definition}

In \cite{AS1987}, Assem and Skowr\`{o}nski introduced gentle algebra, which is used to study the derived equivalence of hereditary algebras of Euclidean type $\w{\mathbb{A}}$.
All indecomposable modules over a gentle algebra have been described by Butler and Ringel
since all gentle algebras are string, see \cite[Section 3, page 161]{BR1987}.
We will recall the results in \cite{BR1987} in the next subsection.

\subsection{Finitely generated module categories for gentle algebras}

In order to facilitate the readers, we will review strings and bands here.
First, for each arrow $a$, we define it has a {\defines formal inverse} $a^{-1}$ and denote by $\Q_1^{-1}$ the set of all formal inverses.
Furthermore a path $a_1a_2\cdots a_n$ of length $n \>=1$ has a formal inverse $a_n^{-1}\cdots a_2^{-1}a_1^{-1}$,
and in particular, we define the formal inverse of each path $\e_v$ ($v\in\Q_0$) of length zero corresponding to $v$ is itself.
Naturally, we define $(a^{-1})^{-1}=a$ for any $a\in\Q_1\cap \Q_1^{-1}$.
Clearly, $\s$ and $\t$ have natural extension $\s, \t: \Q_1\cup \Q_1^{-1} \to \Q_0$ such that
\[ \s(a^{-}) = \t(a) \text{~and~} \t(a^-) = \s(a) \]
Strings and bands are sequences of arrows and formal inverses satisfying some conditions,
which were originally introduced by Wald and Waschb\"{u}sch in \cite{WW1985} and were called by V-sequences and primitive V-sequences.
Strings and bands can be used to describe indecomposable modules over any string and gentle algebra.
Next, we recall the definition of a string and band on a gentle pair.

\begin{definition}[{Strings and bands \cite{BR1987}}] \rm
Let $A=\kk\Q/\I$ be a gentle algebra.
\begin{itemize}
\item[(1)]
A {\defines string} in the gentle pair $(\Q,\I)$ of length $n$ is a sequence $\str = a_1\cdots a_n$ in $\Q_1\cup \Q_1^{-1}$ such that the following conditions hold.
\begin{enumerate}
[label=($\mathbf{S}$\arabic*)]
  \item The equation $\t(a_i) = \s(a_{i+1})$ holds for all $1 \=< i \=< n-1$.
  \item The sequence $\str$ is without any relation,
    i.e., if $a_i, a_{i+1}\in\Q_1$, then $a_ia_{i+1}\notin \I$;
    and if $a_i^{-1}, a_{i+1}^{-1}\in\Q_1$, then $a_{i+1}a_i\notin \I$.
  \item If $a_i\in\Q_1$ and $a_{i+1}\in\Q_1^{-1}$, then $a_i \ne a_{i+1}^{-1}$;
    and if $a_i\in\Q_1^{-1}$ and $a_{i+1}\in\Q_1$, then $a_{i+1} \ne a_i^{-1}$.
\end{enumerate}
In particular, we define $\varnothing$ as a {\defines trivial string}.
Two strings $\str$ and $\str'$ is called {\defines equivalent} if $\str = \str'$ or $\str^{-1}=\str'$.
The set of all equivalent classes of strings in $(\Q,\I)$ is denoted by $\Str(A)$.
\item[(2)]
A {\defines band} in gentle pair $(\Q,\I)$ of length $n$ is a string $\band = a_1\cdots a_n$ in $\Q_1\cup \Q_1^{-1}$ such that the following conditions hold.
\begin{enumerate}
[label=($\mathbf{B}$\arabic*)]
  \item $\t(a_n)=\s(a_1)$.
  \item $\band$ is not a power of any string.
  \item $\band^2$ is a string.
\end{enumerate}
Two band $\band$ and $\band'$ is called {\defines equivalent} if there is an integer $t$ with $0\=< t <n$ such that $\band[t] = \band'$ holds or $\band[t]^{-1}=\band'$ holds, where $\band[t] := b_{1+t}b_{2+t}\cdots b_nb_1\cdots b_{t}$.
The set of all equivalent classes of bands in $(\Q,\I)$ is denoted by $\Band(A)$.
\end{itemize}
\end{definition}

We use $\modcat A$ to represent the finitely generated right $A$-module category of $A$,
and use $\ind(\modcat A)$ to represent the set of all isoclasses of indecomposable modules.
Moreover, for simplicity, we do not differentiate between equivalent strings/bands,
and we call a string module is a {\defines direct string module} if the string $\str$ corresponded by it is a direct string,
where {\defines direct string} is either a trivial string, a string with length zero, or a string of the form
\begin{center}
  $a_1a_2\cdots a_n = \xymatrix{\bullet \ar[r]^{a_1} & \bullet \ar[r]^{a_2} & \cdots \ar[r]^{a_n} & \bullet}
      (=a_n^{-1}\cdots a_2^{-1}a_1^{-1})$.
\end{center}

\begin{theorem}[Wald--Waschb--Butler--Ringel \cite{WW1985, BR1987}] \label{thm:WWBR}
Let $A$ be a gentle algebra. Then there is a bijection
\[ \M : \Str(A) \cup (\Band(A)\times \mathscr{J}) \to \ind(\modcat A), \]
where $\mathscr{J}$ is the set of all Jordan block
\[ \pmb{J}_n(\lambda) :=
\left(\begin{matrix}
1 & \lambda & & \\
& 1 & ... & \\
& & ... & \lambda \\
& & & 1
\end{matrix}\right)_{n\times n}
\]
with non-zero eigenvalue $\lambda$.
The indecomposable module $\M(\str)$ corresponded by the string $\str \in \Str(A)$ is called a {\defines string module},
and the indecomposable module $B(n,\lambda) := \M(\band, \pmb{J}_n(\lambda))$ corresponded by the pair $(\band,\pmb{J}_n(\lambda)) \in \Band(A)\times \mathscr{J}$ is called a {\defines band module}.
\end{theorem}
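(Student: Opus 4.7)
The plan is to establish the theorem in four steps: construct the assignment $\M$ explicitly, verify that each $\M(\str)$ and $B(n,\lambda)$ is indecomposable, prove surjectivity onto $\ind(\modcat A)$ via the functorial filtration technique of Butler--Ringel, and finally check that distinct equivalence classes give non-isomorphic modules.

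First I would define $\M(\str)$ as a representation of $\Q$: for a string $\str=a_1\cdots a_n$, take a $(n+1)$-dimensional vector space with basis $v_0,\ldots,v_n$, place $v_i$ at the vertex visited after the $i$-th letter of $\str$, and let each arrow act as the identity on adjacent basis vectors prescribed by a matching letter of $\str$ and as zero elsewhere. Condition $(\mathbf{S}2)$ ensures that the generators of $\I$ annihilate the resulting module, so this is a well-defined $A$-module. For a band $\band$ and Jordan block $\pmb{J}_n(\lambda)$, I would perform the same construction cyclically on an $n|\band|$-dimensional space, but twist the identification closing the cycle by $\pmb{J}_n(\lambda)$; the band axioms $(\mathbf{B}1)$--$(\mathbf{B}3)$ guarantee compatibility with $\I$.

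Next I would establish indecomposability by computing $\End_A$. Using gentle axioms \ref{G2} and \ref{G3}, any endomorphism of $\M(\str)$ decomposes as a sum of a scalar multiple of the identity and operators supported on proper substring inclusions; the latter are shown to be nilpotent by a length argument on the string, so the endomorphism ring is local. For $B(n,\lambda)$, the same analysis reduces the non-scalar part of the endomorphism ring to the commutant of $\pmb{J}_n(\lambda)$, which is itself local, and indecomposability follows.

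The main obstacle, and the deepest part of the proof, is surjectivity. I would follow Butler--Ringel's functorial filtration method: to each pair $(w,i)$ consisting of an infinite/periodic word $w$ in $\Q_1 \cup \Q_1^{-1}$ and a position $i$ one attaches a pair of subfunctors on $\modcat A$ whose quotient on any module $M$ encodes the ``component of $M$ along $w$ at height $i$''. Using axioms \ref{G1}--\ref{G4} one proves (i) these quotients are at most one-dimensional on indecomposable modules, and (ii) the sum over all $(w,i)$ of these subquotients reconstructs $M$ up to isomorphism as a direct sum of string and band modules. The combinatorial bookkeeping required to verify (ii) for a genuinely arbitrary $M \in \modcat A$ is the technical core, and handling infinite words that are neither strings nor bands (which must be shown to contribute nothing to a finite-dimensional $M$) is where care is needed.

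Finally, well-definedness of $\M$ on equivalence classes is immediate from the symmetric nature of the construction, since reversing a string or rotating/inverting a band produces an isomorphic representation. For injectivity, non-equivalent strings are distinguished by the combinatorial shape of their dimension vectors together with the support of the identity maps, while non-equivalent pairs $(\band,\pmb{J}_n(\lambda))$ are separated by the conjugacy class of the monodromy operator around the band, which is exactly the data $\pmb{J}_n(\lambda)$ up to similarity, combined with the rotation class of $\band$.
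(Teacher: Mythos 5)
The first thing to say is that the paper does not prove this statement at all: Theorem \ref{thm:WWBR} is the classical Wald--Waschb\"{u}sch--Butler--Ringel classification, imported by citation from \cite{WW1985, BR1987}, so there is no in-paper argument to compare yours against. What you have written is a faithful roadmap of the standard literature proof: the explicit quiver-representation construction of $\M(\str)$ and $B(n,\lambda)$, locality of endomorphism rings via nilpotence of non-identity graph maps, and surjectivity by the Butler--Ringel functorial filtration. As a sketch it is sound, and you correctly identify where the real difficulty sits: step (ii) of the surjectivity argument (reassembling an arbitrary finite-dimensional $M$ from the one-dimensional subquotients attached to words, and ruling out contributions from infinite non-periodic words) is the entire technical content of \cite{BR1987}, and your proposal only names it rather than carries it out. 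For a cited classical theorem that is acceptable, but it should be understood that the proposal is an outline, not a proof.

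Two smaller points worth tightening. First, $\Str(A)$ must include the length-zero strings $\e_v$, whose modules are the simples $S(v)$; your construction as written starts from $\str=a_1\cdots a_n$ with $n\geqslant 1$, and note that the paper's ``trivial string'' $\varnothing$ cannot genuinely participate in the bijection, since the zero module is not indecomposable. Second, your injectivity argument for string modules via ``dimension vectors together with the support of the identity maps'' is loose: dimension vectors alone do not separate inequivalent strings, and the clean way to finish is either to invoke the explicit description of $\Hom$-spaces between string modules (graph maps indexed by admissible common substrings), which shows $\M(\str)\cong\M(\str')$ forces $\str'\in\{\str,\str^{-1}\}$, or to read the string off from the same filtration functors used for surjectivity. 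Similarly, for band modules the monodromy operator is only well defined up to conjugacy and up to the choice of base point and orientation on $\band$, which is exactly why the equivalence on $\Band(A)$ is by rotation and inversion; this matches your claim but deserves the explicit remark.
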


\section{Proj.dim+inj.dim for indecomposable modules over gentle algebra} \label{sect:pdim+idim}

Let $A$ be a gentle algebra. We consider the upper bound of $\pdim M + \idim M$ in this section ($M\in \ind(\modcat A)$).
The following terminologies will be used.
\begin{itemize}
  \item A {\defines strong source} (resp., {\defines strong sink}) $v$ of a quiver $\Q$ is a source (resp., sink) of the quiver $\Q$ such that there at most one arrow $\alpha$ with $\s(\alpha)=v$ (resp., $\t(\alpha)=v$).

  \item A vertex $v$ on a string $\str$ (resp., band $\band$) is said to be a {\defines source} if there is no arrow on $\str$ (resp., $\band$) ending at $v$,
and dually a vertex $w$ on $\str$ (resp., band $\band$) is said to be a {\defines sink} if there is no arrow on $\str$ (resp., band $\band$) starting at $w$.

  \item A {\defines forbidden path} mentioned in the following statement is either a path $F=a_1\cdots a_n$ ($a_1, \ldots, a_n\in\Q_1$) with $a_ia_{i+1}\in\I$ ($1\=< i \=< n-1$), a path of length one, or a path $\e_v$ of length zero with $\sharp\s^{-1}(\e_v)=\sharp\{b\}=1=\sharp\t^{-1}(\e_v)=\sharp\{a\}$ and $ab\in\I$,
a {\defines left $($reps., right$)$ maximal forbidden path} $F=a_1\cdots a_n$ is a forbidden path such that there is no arrow $\alpha$ with $\t(\alpha)=\s(F)$ (resp., with $\t(F)=\s(\alpha)$) satisfying $\alpha a_1\in\I$ (resp., $a_n\alpha\in \I$).
The following two forbidden paths are called {\defines maximal forbidden path}:
\begin{itemize}
  \item a forbidden path that is both left maximal and right maximal.
  \item a forbidden path of length zero.
\end{itemize}
The name ``maximum forbidden path'' originates from \cite{OPS2018} and was originally referred to as the forbidden thread in \cite{AG2008}.
\end{itemize}

\subsection{Projective/injective dimensions of string modules} \label{subsect:str}

We use $\top(\str)$ to represent the set of all sources of $\str$ since the simple module $S(v)$ corresponded by source $v$ is a direct summand of the top $\top\M(\str)$ of $\M(\str)$, and furthermore, one can check
\[ \top\M(\str) \cong \bigoplus_{v\in \top(\str)} S(v). \]
Dually, we use $\soc(\str)$ to represent the set of all sinks of $\str$, and one can check
\[ \soc\M(\str) \cong \bigoplus_{v\in \soc(\str)} S(v). \]
The above two facts have been pointed out in \cite[Proposition 4.3]{LGH2024} and \cite[Lemma 3.4]{ZhangLiu2024} by using the geometric modules provided by Opper--Plamondon--Schroll \cite{OPS2018} and Baur--Coelho-Simo\~{e}s \cite{BCS2021}.
Moreover, in \cite[Theorem 2.8, or see Lemmas 2.4-2.7]{CPS2020} \c{C}anak\c{c}\i--Pauksztello--Schroll described the Homologies of all indecomposable objects in the derived category of gentle algebra by using a combined method.
Furthermore, we have the following important result.

\begin{lemma} \label{lemm:str}
For any string module $\M(\str)$,
\begin{enumerate}[label={\rm(\arabic*)}]
  \item its $1$-syzygy, write it as $\Omega_1(\M(\str))$, satisfies
\[ \Omega_1(\M(\str)) \cong L \oplus \bigg(\bigoplus_{v\in\soc(\str) \backslash \{\s(\str),\t(\str)\}} P(v) \bigg) \oplus R, \]
where $L$ and $R$ are direct string modules;
\label{lemm-str statem 1}
  \item and its $1$-cosyzygy, write it as $\mho_1(\M(\str))$, satisfies
\[ \mho_1(\M(\str)) \cong L' \oplus \bigg(\bigoplus_{v\in\top(\str) \backslash \{\s(\str),\t(\str)\}} E(v) \bigg) \oplus R', \]
where $L'$ and $R'$ are direct string modules.
\label{lemm-str statem 2}
\end{enumerate}
\end{lemma}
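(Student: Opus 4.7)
The plan is to construct the projective cover of $\M(\str)$ explicitly and then read off its kernel from the combinatorial structure of $\str$; I will argue part \ref{lemm-str statem 1} in detail, since part \ref{lemm-str statem 2} follows by the analogous argument applied to $A^{\op}$, under which the $1$-cosyzygy over $A$ becomes the $1$-syzygy over $A^{\op}$ and $E(v)$ becomes $P(v)$. Using the cited fact $\top\M(\str)\cong\bigoplus_{v\in\top(\str)}S(v)$, the projective cover of $\M(\str)$ is the obvious surjection $\pi\colon P:=\bigoplus_{v\in\top(\str)} P(v)\twoheadrightarrow\M(\str)$. A basic consequence of the gentle axioms \ref{G1}--\ref{G4} is that each indecomposable projective $P(v)$ is itself a string module $\M(\str_v)$, where $\str_v$ is the maximal string whose unique source is $v$; it is obtained by gluing at $v$ the two maximal direct substrings along the arrows out of $v$ that do not start any relation in $\I$.

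The geometric picture of $\Ker\pi$ is then as follows. Around each source $v\in\top(\str)$, the string $\str$ descends on either side of $v$ along a direct substring until it either meets a vertex in $\soc(\str)$ or reaches an endpoint $\s(\str)$ or $\t(\str)$; these two direct substrings of $\str$ at $v$ are sub-strings of $\str_v$, but $\str_v$ may continue strictly further in either direction. For each interior sink $w\in\soc(\str)\setminus\{\s(\str),\t(\str)\}$, let $u,u'$ be the two sources of $\str$ flanking $w$. The tail of $\str_u$ past $w$ and the tail of $\str_{u'}$ past $w$ are the two maximal direct strings ending at $w$ modulo $\I$, so by the gentle conditions they glue uniquely at $w$ into the whole of $\str_w$. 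This yields a natural map $P(w)\hookrightarrow P(u)\oplus P(u')$ with image in $\Ker\pi$. At each endpoint of $\str$, the leftover overhang of the adjacent $P(v)$ past $\s(\str)$, respectively $\t(\str)$, is a direct substring and hence gives a direct string module, which is the claimed $L$, respectively $R$.

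Collecting these pieces produces a homomorphism
\[
L\ \oplus \bigoplus_{w\in\soc(\str)\setminus\{\s(\str),\t(\str)\}} P(w)\ \oplus\ R\ \longrightarrow\ \Ker\pi,
\]
which one verifies is an isomorphism by a basis-counting argument: each basis element (path) of each $P(v)$ for $v\in\top(\str)$ either lies in the sub-string of $\str_v$ covering $\str$, or contributes to exactly one of the summands $L$, $R$, or $P(w)$ just constructed; no two such contributions coincide, and nothing is missed. Since $\Omega_1(\M(\str))=\Ker\pi$, this proves \ref{lemm-str statem 1}.

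The main obstacle is the combinatorial matching at each interior sink $w$: one must verify that the two descents from the flanking peaks $u,u'$ past $w$ reassemble to \emph{exactly} the string $\str_w$ of $P(w)$, with no extra relation cutting them short and no branch of $\str_w$ left unaccounted for. This is precisely where the gentle conditions \ref{G2} and \ref{G3} --- at most one continuation and at most one relation in each direction at every arrow --- are essential; failing either would allow the tails to diverge from $\str_w$ or leave a second branch below $w$ unaccounted for. Everything else reduces to bookkeeping once this local identification is established, and the dual argument for \ref{lemm-str statem 2} replaces projective covers by injective envelopes, sources by sinks, and $\str_v$ by the corresponding string of $E(v)$.
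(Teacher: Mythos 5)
Your proposal is correct and follows essentially the same route as the paper: build the projective cover $\bigoplus_{v\in\top(\str)}P(v)\twoheadrightarrow\M(\str)$ from the decomposition of $\top\M(\str)$, identify a full copy of $P(w)$ in the kernel at each interior sink $w$ by gluing the two overhanging tails of the flanking projectives (using \ref{G2}--\ref{G3}), and recognize the endpoint overhangs as the direct string modules $L$ and $R$, with \ref{lemm-str statem 2} by duality. The paper presents the same argument via the peak--valley picture of the string and a figure rather than in prose, so there is no substantive difference.
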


For the convenience of readers, we still provide a proof of Lemma \ref{lemm:str}.

\begin{proof}
Let $\str$ be a string of length $\>=1$ in a gentle pair $(\Q,\I)$. We ignore the case for length zero in this proof.
Each string can be written in one of the following forms:
\begin{align*}
\str_{\rmI}   = & \xymatrix{ v_1 \ar[r] & v_2 \ar@{~}[rr] && v_{n-1} \ar[r] & v_n } \\
\str_{\rmII}  = & \xymatrix{ v_1 \ar@{<-}[r] & v_2 \ar@{~}[rr] && v_{n-1} \ar@{<-}[r] & v_n }  \\
\str_{\rmIII} = & \xymatrix{ v_1 \ar[r] & v_2 \ar@{~}[rr] && v_{n-1} \ar@{<-}[r] & v_n }  \\
\str_{\rmIV}  = & \xymatrix{ v_1 \ar@{<-}[r] & v_2 \ar@{~}[rr] && v_{n-1} \ar[r] & v_n }
\end{align*}
We consider the case of $\str=\str_{\rmI}$, the other cases are similar.
First of all, we rewrite $\str_{\rmI}$ in the following form
($u\>=1$, and each arrow ``$\xymatrix{\ar@{~>}[r] &}$'' is a path of length $\>= 1$)
\[
\xymatrix@C=0.8cm@R=0.6cm{
t_1 \ar[rd]^{a} && && && && && \\
& \bullet \ar@{~>}[rd]
& & t_2 \ar@{~>}[ld] \ar@{~>}[rd]
& & \cdots \ar@{~>}[ld] \ar@{~>}[rd]
& & t_u \ar@{~>}[ld] \ar@{~>}[rd]^{\wp}
& \\
& & s_1
& & s_2
& & \cdots
& & s_u,
}
 \]
then $\top(\str_{\rmI}) = \{t_1, t_2, \ldots, t_u\}$ and $\soc(\str_{\rmI}) = \{s_1, s_2,\ldots, s_u\}$.
Thus, $\displaystyle \top \M(\str_{\rmI}) \cong \bigoplus_{i=1}^u S(t_i)$.
It follows that the projective cover of $\M(\str_{\rmI})$ is
\[ p: \bigoplus_{i=1}^u S(t_i) \to \M(\str_{\rmI}), \]
and we have $P(s_1)$, $\ldots$, $P(s_{u-1})$ are direct summand of $\Omega_1(\M(\str_{\rmI}))$,
cf. \Pic \ref{fig:proj cover of M(s)}.
\begin{figure}[htbp]
\centering
\includegraphics[width=17.5cm]{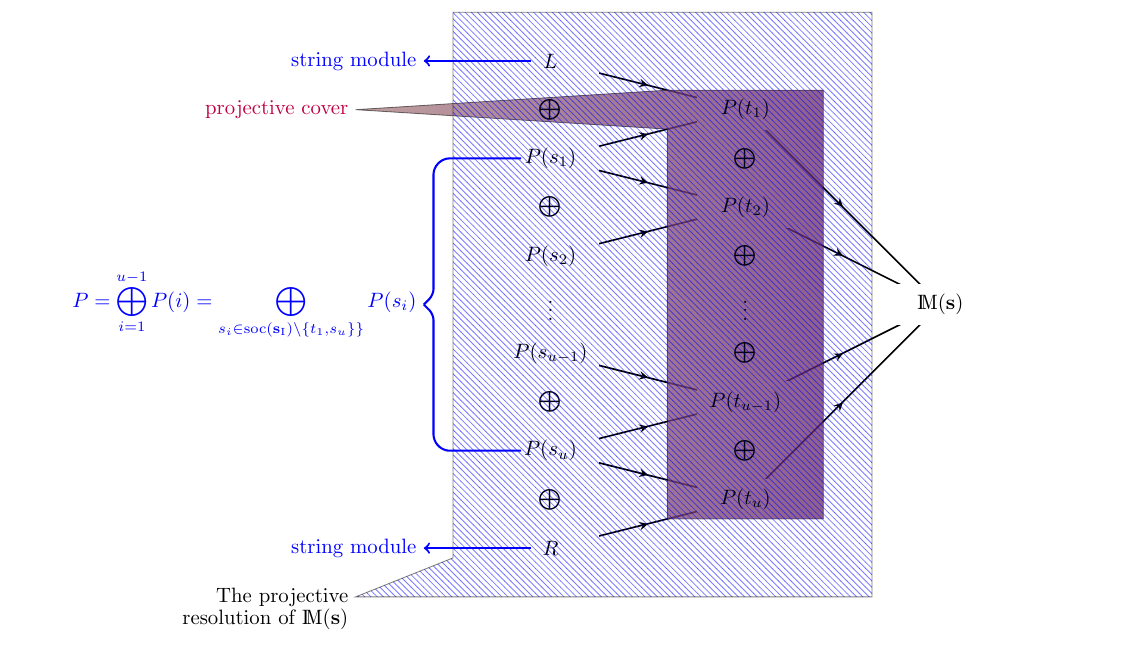}
\caption{\textsf{The projective cover of $\M(\str)$ in the case for $\str=\str_{\rmI}$ and its kernel}}
\label{fig:proj cover of M(s)}
\end{figure}
Notice that the 1-syzygy $\Omega_1(\M(\str_{\rmI}))$ has another direct summands as follows:
\begin{enumerate}
[label=Case \arabic*]
  \item if there exists an arrow $\alpha \ne a$ with $\s(\alpha)=t_1$,
    then $\Omega_1(\M(\str_{\rmI}))$ has a direct summand $L$ which is a direct string module with $\top L = S(\t(\alpha))$; \label{lemm-str:(1)}
  \item if there exists an arrow $\beta$ with $\s(\beta)=s_u$,
    then $\Omega_1(\M(\str_{\rmI}))$ has a direct summand $R$ which is a direct string module with $\top R = S(\t(\beta))$.
    \label{lemm-str:(2)}
\end{enumerate}
Thus, we have
\[\Omega_1(\M(\str_{\rmI})) = L \oplus \bigg( \bigoplus_{i=1}^{u-1} P(i) \bigg) \oplus R \]
as required, where $L$ and $R$ may be zero, and if $u=1$ then $P$ is zero.
\end{proof}

A vertex $v$ in gentle pair $(\Q,\I)$ is called a {\defines vertex with relation(s)} if there are two arrows $a_1$ and $a_2$ with $\t(a_1)=v=\s(a_2)$ such that $a_1a_2\in \I$ holds.
Conversely, we call $v$ is a {\defines vertex without relation on a sequence $\{a_u\}_{1\=< u\=< m} := a_1\cdots a_m$} (here, $a_1,\ldots, a_m\in \Q_1\cup\Q_1^{-1}$, and $\t(a_i)=\s(a_{i+1})$ for all $1\=< i < m$)
if $v \in \{\s(a_1),\ldots, \s(a_m), \t(a_m)\}$ is not a vertex with a relation on $\{a_u\}_{1\=< u\=< m}$.
To be precise, $v$ satisfies one of the following conditions.
\begin{itemize}
  \item $\t(a_i)=v=\s(a_{i+1})$ for some $1\=<i<m$, $a_i,a_{i+1}\in \Q_1$ (resp., $\in\Q_1^{-1}$), and $a_ia_{i+1}$ (resp., $a_{i+1}^{-1}a_i^{-1}$) $\not\in \I$;
  \item $\t(a_i)=v=\t(a_{i+1})$ for some $1\=<i<m$, $a_i\in \Q_1$ (resp., $\in\Q_1^{-1}$), and $a_{i+1}\in \Q_1^{-1}$ (resp., $\in\Q_1$);
  \item $v \in \{\s(a_1),\t(a_m)\}$.
\end{itemize}
The terminology ``vertex without relation on $\{a_u\}$'' is used in the discourse of Proposition \ref{prop:idim/pdim} and the proof of Lemma \ref{lemm:QT}.
Notice that we have the following three facts:
\begin{itemize}
  \item $L$ is not projective if and only if the vertex $\t(\alpha)$ is a vertex with a relation,
where $\alpha$ is the arrow given in \ref{lemm-str:(1)};
  \item $R$ is not projective if and only if the vertex $\t(\beta)$ is a vertex with a relation,
where $\beta$ is the arrow given in \ref{lemm-str:(2)};
  \item $L$ and $R$ are direct string modules such that the strings $\M^{-1}(L)=\mathbf{l}$ and $\M^{-1}(R)=\mathbf{r}$ both are direct strings, i.e., $\mathbf{l}$ and $\mathbf{r}$ is of the form $\str_{\rmI}$.
\end{itemize}
Thus, we can consider the 1-syzygies of $L$ and $R$ by using Lemma \ref{lemm:str},
and obtain the 2-syzygy of $\M(\str)$. By repeatedly using Lemma \ref{lemm:str}, we have the following result.

\begin{proposition} \label{prop:idim/pdim}
Let $\str = a_1\cdots a_m$ $(a_1,\ldots, a_m \in \Q_1\cup\Q_1^{-1})$ be a string.
\begin{itemize}
  \item[\rm(1)]
    \begin{enumerate}[label={\rm(1.\arabic*)}]
      \item If there exists a left maximal forbidden path $F_1=f_1\cdots f_{u_{\Left}}$
        $(f_1, \ldots, f_{u_{\Left}}$ $\in \Q_1)$ such that $\t(F_1)=\s(\str)$ holds
        and $\t(f_{u_{\Left}}) = \s(a_1)$ is a vertex without relation on the sequence
        $\{f_1,\ldots, f_{u_{\Left}},a_1,\cdots,a_m\}$,
        then $\idim \M(\str)\>= u_{\Left}$. \label{prop:string (1) 1}

      \item If there exists a left maximal forbidden path $F_2=g_1\cdots g_{u_{\Right}}$
       $(g_1,\ldots, g_{u_{\Right}})$ such that $\t(F_2)=\t(\str)$ holds
       and $\t(a_m)=\t(g_{u_{\Right}})$  is a vertex without relation on the sequence
       $\{a_1,\ldots, a_m, g_{u_{\Right}}^{-1}, \ldots, g_1^{-1}\}$,
       then $\idim \M(\str)\>= u_{\Right}$. \label{prop:string (1) 2}
      \item If \ref{prop:string (1) 1} and \ref{prop:string (1) 2} hold, then $\idim \M(\str) = \max\{u_{\Left}, u_{\Right}\}$.
    \end{enumerate}
  \item[\rm(2)]
    \begin{enumerate}[label={\rm(2.\arabic*)}]
      \item If there exists a right maximal forbidden path $F_1=f_1\cdots f_{d_{\Left}}$
        $(f_1, \ldots, f_{d_{\Right}}$ $\in \Q_1)$ such that $\s(F_1)=\s(\str)$ holds
        and $\s(f_1)=\s(a_1)$ is a vertex without relation on the sequence
        $\{ f_{d_{\Left}}^{-1}, \ldots, f_1^{-1}, a_1,\ldots, a_m\}$,
        then $\pdim \M(\str)\>= d_{\Left}$. \label{prop:string (2) 1}
      \item If there exists a left maximal forbidden path $F_2=g_1\cdots g_{d_{\Right}}$
        $(g_1, \ldots, g_{d_{\Right}}$ $\in \Q_1)$ such that $\s(F_1)=\t(\str)$ holds
        and $\s(g_1)=\t(a_m)$ is a vertex without relation on the sequence
        $\{ a_1,\ldots, a_m, g_1,\ldots, g_m\}$,
        then $\pdim \M(\str)\>= d_{\Right}$. \label{prop:string (2) 2}
      \item If \ref{prop:string (2) 1} and \ref{prop:string (2) 2} hold, then $\pdim \M(\str) = \max\{d_{\Left}, d_{\Right}\}$.
    \end{enumerate}
\end{itemize}
\end{proposition}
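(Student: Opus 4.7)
The plan is to prove Proposition \ref{prop:idim/pdim} by iterating Lemma \ref{lemm:str}, peeling off one arrow of the forbidden path at each step. First, the standard $\kk$-duality $D = \Hom_{\kk}(-,\kk)$ sends the gentle algebra $A$ to its (also gentle) opposite $A^{\op}$, sends $\M(\str)$ to $\M(\str^{-1})$, and interchanges projective and injective dimensions; under this duality, part (1) becomes part (2) applied to $A^{\op}$, so it suffices to establish part (2). Moreover, parts (1.3) and (2.3) will follow once the lower bounds from (1.1)/(1.2) and (2.1)/(2.2) are matched by a corresponding upper bound.

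For the inequality (2.1), I would induct on $d_{\Left}\geq 1$ and repeatedly apply Lemma \ref{lemm:str}\ref{lemm-str statem 1}. In the base case $d_{\Left}=1$, the forbidden arrow $f_1$ at $\s(\str)$ is distinct from the initial arrow of $\str$ (this is exactly what the ``without relation'' hypothesis at $\s(a_1)$ guarantees), so $P(\s(\str))$ properly covers $\M(\str)$ on that side, forcing $\M(\str)$ to be non-projective and $\pdim\M(\str)\geq 1$. In the inductive step, I write $\Omega_1(\M(\str)) = L \oplus \bigl(\bigoplus_j P(j)\bigr) \oplus R$ as in Lemma \ref{lemm:str}\ref{lemm-str statem 1}, where $L$ is a direct string module whose underlying string $\mathbf{l}$ starts with $f_1$ followed by the unique non-$\I$ continuation dictated by gentleness. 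I would then verify that $f_2\cdots f_{d_{\Left}}$ is still a right maximal forbidden path with source $\t(f_1)=\s(\mathbf{l})$, and that the ``without relation'' hypothesis transfers to the junction of $\mathbf{l}$ and $f_2\cdots f_{d_{\Left}}$; the latter uses $f_1f_2\in\I$ together with axioms \ref{G2} and \ref{G3} to pin down the shape of $\mathbf{l}$ near $f_2$. The inductive hypothesis applied to $\mathbf{l}$ then yields $\pdim L\geq d_{\Left}-1$, and since $L$ is a direct summand of $\Omega_1(\M(\str))$ one concludes $\pdim\M(\str)\geq d_{\Left}$.

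Part (2.2) is entirely symmetric, using the $R$-summand on the right end of $\str$, and (2.3) combines the two contributions: the decomposition in Lemma \ref{lemm:str}\ref{lemm-str statem 1} exhibits the left and right extensions as independent summands, so the two projective dimensions propagate independently through the resolution and the maximum governs $\pdim\M(\str)$. For the matching upper bound, after $\max\{d_{\Left},d_{\Right}\}$ iterations both branches have either absorbed into projective summands or terminated, because the right maximality of $F_1$ and $F_2$ prevents further extension along the forbidden paths. Part (1) can either be deduced from (2) by duality or proved in parallel by replacing Lemma \ref{lemm:str}\ref{lemm-str statem 1} with Lemma \ref{lemm:str}\ref{lemm-str statem 2} and syzygies with cosyzygies.

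The main obstacle will be the inductive verification in step (b) above: showing that the ``without relation'' hypothesis propagates from $(\str,F_1)$ to $(\mathbf{l},f_2\cdots f_{d_{\Left}})$ after peeling off $f_1$. This requires a careful case analysis on whether $a_1\in\Q_1$ or $a_1\in\Q_1^{-1}$ (which affects how $\mathbf{l}$ meets $\str$ at $\s(\str)$) and on whether $f_2$ extends $\mathbf{l}$ in a compatible direction; in each sub-case the gentleness axioms, together with $f_1f_2\in\I$, force a unique choice and guarantee that the next junction remains ``without relation''. Once this combinatorial bookkeeping is discharged, the induction runs cleanly and delivers the claimed bounds.
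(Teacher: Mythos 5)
Your proposal follows essentially the same route as the paper, which establishes Proposition \ref{prop:idim/pdim} precisely by iterating Lemma \ref{lemm:str} and tracking when the end summands $L$ and $R$ of each syzygy fail to be projective (namely, when the next arrow of the forbidden path provides a relation at the new top vertex); your induction on the length of the forbidden path, with the verification that the ``without relation'' condition propagates to the junction of $\mathbf{l}$ with $f_2\cdots f_{d_{\Left}}$ via $f_1f_2\in\I$ and gentleness, is a correct and in fact more detailed account of exactly this argument. The only points to tidy are cosmetic: $\mathbf{l}$ starts \emph{at} $\t(f_1)$ rather than \emph{with} $f_1$, and the base of the induction should also accommodate the degenerate case where $\mathbf{l}$ is a trivial string.
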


\subsection{Projective/injective dimensions of band modules} \label{subsect:band}

For each module $B(n,\lambda)$ ($n\in \NN$, and if $n=0$, then $B(0,\lambda)$ is not a band and we call it a trivial case; and if $n>0$, then $B(n,\lambda)$ is a band module), we have an Auslander--Reiten sequence of the form
\[ 0 \To{} B(n,\lambda) \To{} B(n+1,\lambda) \oplus B(n-1,\lambda) \To{} B(n,\lambda) \To{} 0. \]
Thus, we have a short exact sequence
\[ 0 \To{} B(1,\lambda) \To{} B(n, \lambda) \To{} B(n-1,\lambda) \To{} 0 \]
for any $n\>= 2$. Obviously, $\pdim B(1,\lambda)=1$ and $\pdim B(n-1,\lambda) = 1$ admit $\pdim B(n, \lambda)$ $=1$ by using Horseshoe Lemma and the fact that band modules are not projective.
Furthermore, if $\pdim B(1,\lambda)=1$, then we can prove the following proposition by using induction.

\begin{proposition} \label{prop:band}
The projective and injective dimension of any band module over a gentle algebra is $1$.


\end{proposition}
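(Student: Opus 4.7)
The plan is to induct on $n$, using the short exact sequence
\[ 0 \to B(1,\lambda) \to B(n,\lambda) \to B(n-1,\lambda) \to 0 \]
already exhibited in the paragraph preceding the statement, together with the Horseshoe Lemma and the fact (noted in the excerpt) that band modules are never projective nor injective.

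First I would settle the base case $\pdim B(1,\lambda) = 1 = \idim B(1,\lambda)$. The band module $B(1,\lambda)$ admits a description analogous to a string module, except that it is laid out on a cycle rather than on a segment. Following the recipe in the proof of Lemma \ref{lemm:str}, each source vertex of $\band$ (viewed cyclically) contributes a simple summand to the top, and the projective cover has the form $\bigoplus_{v\in\top(\band)} P(v) \twoheadrightarrow B(1,\lambda)$. Because $\band$ is cyclic, the resulting picture has no left or right ends, so the potentially non-projective tail summands $L$ and $R$ arising in cases \ref{lemm-str:(1)} and \ref{lemm-str:(2)} of Lemma \ref{lemm:str} simply do not appear; only the internal summands $P(s_i)$ at the sinks $s_i$ of $\band$ survive. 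This gives $\Omega_1(B(1,\lambda)) \cong \bigoplus_{w\in\soc(\band)} P(w)$, which is projective. Hence $\pdim B(1,\lambda) \=< 1$, and non-projectivity of band modules upgrades this to equality. The dual calculation (or passage to the opposite gentle algebra, which is again gentle) handles $\idim B(1,\lambda) = 1$.

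For the inductive step, assume $\pdim B(n-1,\lambda) = 1$. Applying the Horseshoe Lemma to the displayed short exact sequence together with $\pdim B(1,\lambda) = 1$ yields $\pdim B(n,\lambda) \=< 1$, and non-projectivity of $B(n,\lambda)$ again promotes the inequality to equality. The injective statement follows identically by dualising or by a symmetric induction starting from $\idim B(1,\lambda) = 1$.

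The main obstacle I anticipate is the clean justification of the base case: one must verify that, when the string-module analysis of Lemma \ref{lemm:str} is transported to the cyclic setting of a band, the closing-up of the picture genuinely does not introduce any non-projective direct summand in the syzygy. Once this cyclic analogue of Figure \ref{fig:proj cover of M(s)} is recorded, the rest of the argument is purely formal induction via the Horseshoe Lemma.
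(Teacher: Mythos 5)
Your proposal is correct and follows essentially the same route as the paper: the base case is settled by exhibiting the projective cover $\bigoplus_i P(t_i)\twoheadrightarrow B(1,\lambda)$ and observing that the cyclic layout forces the syzygy to be $\bigoplus_i P(s_i)$ with no end summands $L$, $R$, and the inductive step uses the short exact sequence $0\to B(1,\lambda)\to B(n,\lambda)\to B(n-1,\lambda)\to 0$ with the Horseshoe Lemma, the injective case being dual. The only difference is cosmetic: the paper records the cyclic analogue of the string picture in a figure rather than deriving it as a degeneration of Lemma \ref{lemm:str}.
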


In \cite[Corollary 2.13]{CPS2020}, the authors have shown that Proposition \ref{prop:band} holds in the case of band modules to be quasi-simple. Here, {\defines quasi-simple band module} is a band module of the form $B(1,\lambda)$. For the convenience of readers, we still provide a proof of Proposition \ref{prop:band}.

\begin{proof}
Let $B(1,\lambda)$ be a band module and $\band$ be the band corresponding to it.
Next, we show $\pdim B(1,\lambda) =1$, the proof of $\pdim B(n,\lambda) =1$ can be proved by induction
and the extension $0 \To{} B(1,\lambda) \To{} B(n, \lambda) \To{} B(n-1,\lambda) \To{} 0$.
Since the band $\band$ can be written as
\[
\xymatrix@C=0.8cm@R=0.6cm{
 & & & & t_1 \ar@{~>}@/^2pc/[rrrdd] \ar@{~>}@/_2pc/[llldd]
 & & & \\
 & & t_2 \ar@{~>}[ld] \ar@{~>}[rd]
& & \cdots \ar@{~>}[ld] \ar@{~>}[rd]
& & t_u \ar@{~>}[ld] \ar@{~>}[rd]_{\wp}
& \\
  & s_1
& & s_2
& & \cdots
& & s_u,
}
 \]
we obtain that the projective cover of $B(1,\lambda) \cong \M(\band, \pmb{J}_1(\lambda))$ is
\[ p: \bigoplus_{i=1}^u P(t_i) \to B(1,\lambda). \]
It follows that the $1$-syzygy is
\[ \Omega_1(B(1,\lambda)) = \bigoplus_{i=1}^u P(s_i), \]
cf. \Pic \ref{fig:proj cover of B}. So, $\pdim B(1,\lambda) =1$.

We can show that the injective dimension of any band module is one by a dual way.
\begin{figure}[htbp]
\begin{center}
\includegraphics[width=10cm]{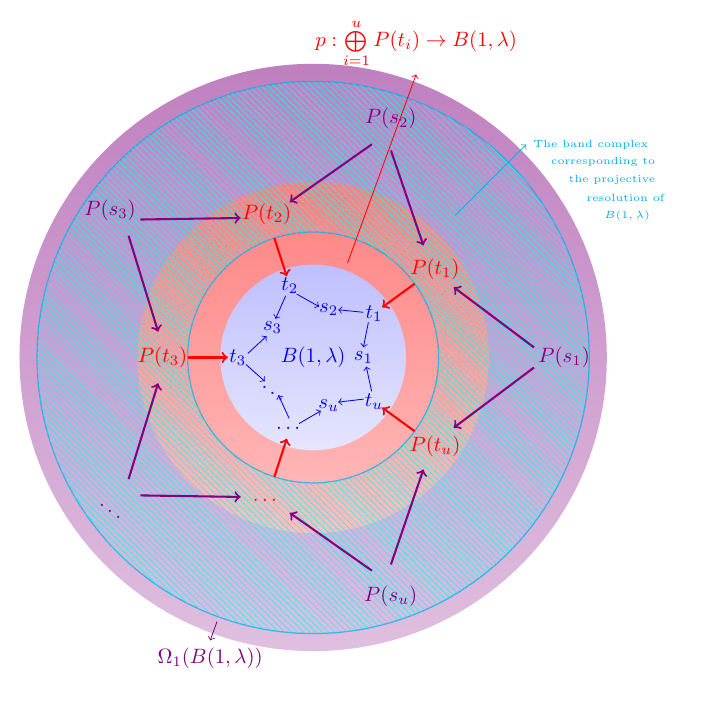}
\caption{\textsf{The projective resolution of a band module $B(1, \lambda)$}}
\label{fig:proj cover of B}
\end{center}
\end{figure}
\end{proof}

\subsection{The case for gentle algebra with finite global dimension}

Recall that a {\defines forbidden cycle} is an oriented cycle $\C = c_1\cdots c_n$ ($\t(c_n)=\s(c_1)$) such that $c_1c_2, \ldots$, $c_{n-1}c_n$, $c_nc_1$ $\in \I$. If a gentle pair $(\Q,\I)$ has a forbidden cycle, then $A=\kk\Q/\I$ has an infinite global dimension since there exists an arbitrarily length forbidden path on $\C$.
The following result describes the global dimension of gentle algebra.

\begin{theorem}[\!\!{\cite[Theorems 5.10 and 6.9]{LGH2024}}] \label{thm:LGH2024}
A gentle algebra has a finite global dimension if and only if its bound quiver does not have any forbidden cycle.
A gentle algebra has a global dimension $n$ if and only if the supremum of the lengths of all forbidden paths in its bound quiver is $n$.
\end{theorem}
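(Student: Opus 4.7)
By Theorem \ref{thm:WWBR}, every indecomposable right $A$-module is either a string module or a band module, so $\gldim A = \sup_{\str\in\Str(A)} \pdim \M(\str)$ in view of Proposition \ref{prop:band}, which caps the contribution of band modules at $1$. Furthermore, since $\gldim A = \sup_{v\in\Q_0} \pdim S(v)$ and each simple $S(v) = \M(\e_v)$ is a string module corresponding to a length-zero string, it suffices to control $\pdim \M(\str)$ in terms of forbidden paths attached to the endpoints of $\str$. The strategy is to prove a two-sided estimate: forbidden paths furnish lower bounds via Proposition \ref{prop:idim/pdim}, while iterated application of Lemma \ref{lemm:str} furnishes matching upper bounds.

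\textbf{Lower bound / forbidden-cycle direction.} Given any forbidden path $F = a_1\cdots a_n$, Proposition \ref{prop:idim/pdim}\ref{prop:string (2) 1} applied to the trivial string $\e_{\s(F)}$ (which corresponds to the simple $S(\s(F))$) yields $\pdim S(\s(F)) \>= n$. If $(\Q,\I)$ admits a forbidden cycle $\C = c_1\cdots c_\ell$, then $\C^k = c_1\cdots c_\ell c_1 \cdots c_\ell \cdots c_1\cdots c_\ell$ is a forbidden path of length $k\ell$ for every $k\>=1$, so $\pdim S(\s(c_1))$ is unbounded and $\gldim A = \infty$. Conversely, if no forbidden cycle exists, a combinatorial pigeonhole argument (there are only finitely many arrows, and any forbidden path cannot traverse the same arrow twice without forming a forbidden cycle) bounds the length of every forbidden path by some $n<\infty$, and the above gives $\gldim A \>= n$.

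\textbf{Upper bound via iterated syzygies.} I would prove by induction on the length $m$ of the longest forbidden path emanating rightward from the endpoints of $\str$ that $\pdim \M(\str) \=< m$. The base case $m=0$ asserts that when no relation starts at either endpoint of $\str$, $\M(\str)$ is projective, which is immediate from the projective-cover description in the proof of Lemma \ref{lemm:str}. For the inductive step, Lemma \ref{lemm:str}\ref{lemm-str statem 1} expresses $\Omega_1(\M(\str))$ as a sum of projectives plus two direct string modules $L$ and $R$; by the three bulleted facts following the proof of Lemma \ref{lemm:str}, each of $L$ and $R$ is either projective (if the relevant ``next arrow'' $\alpha$ or $\beta$ is not part of a relation) or a direct string whose right endpoint is reached by advancing exactly one arrow further along a forbidden path. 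Hence one syzygy step decreases the relevant forbidden-path parameter by $1$, and after at most $n$ steps (the global supremum over forbidden-path lengths) every remaining summand is projective. Combining with the lower bound gives $\gldim A = \sup\{\ell(F) : F \text{ a forbidden path in } (\Q,\I)\}$, which is the two claims of the theorem.

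\textbf{Main obstacle.} The delicate bookkeeping is in the inductive step: one must verify that the ``new'' right endpoint of $L$ or $R$ produced at each syzygy really is the head of an arrow $\alpha$ with $\alpha a_1 \in \I$ (respectively $a_m\beta \in \I$), so that the decrement in forbidden-path length is exactly $1$, and simultaneously track the four string shapes $\str_{\rmI},\str_{\rmII},\str_{\rmIII},\str_{\rmIV}$ together with the boundary cases where an endpoint is a strong source/sink or where the forbidden path has length zero. Once this bookkeeping is set up cleanly (it is the content of the ``vertex without relation'' formalism already introduced before Proposition \ref{prop:idim/pdim}), both directions of the equivalence follow, and the precise equality $\gldim A = \sup \ell(F)$ drops out.
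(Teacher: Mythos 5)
The paper does not actually prove this statement: it is imported wholesale from \cite[Theorems 5.10 and 6.9]{LGH2024}, so there is no internal proof to compare against. Your reconstruction follows what is essentially the standard (and the cited paper's) route, and it is consistent with the machinery developed in Section \ref{sect:pdim+idim}: reduce to string modules via Theorem \ref{thm:WWBR} and Proposition \ref{prop:band}, get lower bounds by walking a syzygy along a forbidden path (which also shows a forbidden cycle forces $\gldim A=\infty$, and your pigeonhole observation that a repeated arrow inside a forbidden path already closes up into a forbidden cycle is correct), and get matching upper bounds by iterating Lemma \ref{lemm:str}, where each syzygy step advances exactly one arrow along the relevant forbidden path. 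That is the right plan and the inductive step is the same bookkeeping the paper performs to arrive at Proposition \ref{prop:idim/pdim}.

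There is, however, one concretely false step: the base case ``when no relation starts at either endpoint of $\str$, $\M(\str)$ is projective.'' Take the hereditary gentle algebra on $1\xrightarrow{a}2\xleftarrow{b}3$ with $\I=0$ and $\str=ab^{-1}$: no relation starts anywhere, yet $\Omega_1(\M(\str))\cong P(2)$ and $\pdim\M(\str)=1$. The point is that Lemma \ref{lemm:str}\ref{lemm-str statem 1} always deposits the projectives $P(v)$ at the \emph{interior} sinks of $\str$ (and possibly nonzero overhangs $L,R$) into the first syzygy, so a string module with an interior sink is essentially never projective; the correct base case is $\pdim\M(\str)\=< 1$. This off-by-one does not destroy the final equality $\gldim A=\sup\ell(F)$, because under the paper's conventions every arrow is itself a forbidden path of length one, so the right-hand supremum is already $\>=1$ for any non-semisimple gentle algebra; but the induction hypothesis as you state it is false at $m=0$ and must be rephrased as $\pdim\M(\str)\=< \max\{1,m\}$ (or the parameter $m$ must be redefined to count length-one forbidden paths at the endpoints). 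A second, smaller looseness: you invoke Proposition \ref{prop:idim/pdim}\ref{prop:string (2) 1} for the trivial string $\e_{\s(F)}$, but that proposition is stated only for strings $a_1\cdots a_m$ with $m\>=1$ and its ``vertex without relation'' hypothesis degenerates for $m=0$; the lower bound $\pdim S(\s(F))\>= n$ is true but needs the direct syzygy computation on $\rad P(\s(F))$ rather than a citation of that proposition.
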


Next, we show the following key lemma by using Theorem \ref{thm:LGH2024}.

\begin{lemma} \label{lemm:Forbid}
Let $M \in \Ima(\M|_{\Str(A)})$ be a string module over gentle algebra $A$,
and $\str_M=\M^{-1}(M) = s_1\cdots s_t$ $(s_1, \ldots, s_t \in\Q_1\cup\Q_1^{-1})$ be the string corresponded by it.
If $\gldim A < \infty$, then the following statements hold.
\begin{itemize}
  \item[\rm(1)] There is at most one left maximal forbidden path $F=f_1\cdots f_{u_{\Left}}$ $($resp., $F'=f_1'\cdots f_{u_{\Right}}'$$)$ $(f_1,\ldots, f_{u_{\Left}}, f_1',\ldots, f_{u_{\Right}}'\in\Q_1)$ such that $\t(F) = \s(\str_M)$ $($resp., $\t(F)=\t(\str_M)$$)$ and $f_1s_1$ $($resp., $f_{u_{\Right}}s_t^{-1}$$)$ is a string of length two;
  \item[\rm(2)] There is at most one right maximal forbidden path $\w{F}=\w{f}_1\cdots \w{f}_{d_{\Left}}$ $($resp., $\w{F}=\w{f}_1'\cdots \w{f}_{d_{\Right}}'$$)$ $(\w{f}_1,\ldots, \w{f}_{d_{\Left}}, \w{f}_1',\ldots, \w{f}_{d_{\Right}}\in\Q_1)$ such that $\s(\w{F}) = \s(\str_M)$ $($resp., $\s(\w{F})=\t(\str_M)$$)$ and $\w{f}_1^{-1}s_1$ $($resp., $s_tf_1$$)$ is a string of length two.
\end{itemize}
\end{lemma}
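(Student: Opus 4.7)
The plan is to establish part (1) for the vertex $\s(\str_M)$; the remaining three assertions then follow by symmetric arguments, obtained by swapping the roles of starting/ending vertices and applying the formal inverse operation to arrows and forbidden paths. Set $v := \s(\str_M) = \s(s_1)$. I read the hypothesis ``$f_1 s_1$ is a string of length two'' as a condition on the last arrow $f_{u_{\Left}}$ of $F$ concatenated with $s_1$, consistent with the indexing $\t(f_{u_{\Left}}) = \s(a_1)$ used in Proposition \ref{prop:idim/pdim}. The argument splits into three steps: first, determine the last arrow of any qualifying forbidden path uniquely from $s_1$; second, propagate uniqueness backward along the forbidden path via \ref{G3}; and third, use the finite global dimension hypothesis to force the backward chain to terminate.

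For the first step, call the last arrow $f$, and I claim it is uniquely determined. If $s_1 \in \Q_1$, then ``$f s_1$ is a string of length two'' means $\t(f) = v$ together with $f s_1 \notin \I$; gentle axiom \ref{G2} states that at most one arrow $c$ satisfies $c s_1 \notin \I$, forcing uniqueness. If instead $s_1 = b^{-1}$ for some $b \in \Q_1$, then the string conditions $(\mathbf{S}1)$ and $(\mathbf{S}3)$ reduce to $\t(f) = \t(b) = v$ and $f \ne b$; by \ref{G1} there are at most two arrows ending at $v$, so $f$ must be the unique arrow ending at $v$ distinct from $b$ (if it exists at all).

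For the second step, once $f = f_{u_{\Left}}$ is pinned down, the forbidden-path condition requires $f_i f_{i+1} \in \I$ for every consecutive pair; by \ref{G3}, for any arrow $a$ there is at most one arrow $c$ with $c a \in \I$, so going backward from $f_{u_{\Left}}$, the predecessor arrow is uniquely determined at each stage. For the third step, this backward chain cannot revisit an arrow: such a repetition would produce a forbidden cycle in $(\Q,\I)$, which is excluded by Theorem \ref{thm:LGH2024} under the assumption $\gldim A < \infty$. Since $\Q_1$ is finite and the chain is non-repeating, it must terminate at a unique arrow $f_1$ that admits no forbidden predecessor, delivering the unique left maximal forbidden path $F$. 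The main obstacle is the case split between $s_1 \in \Q_1$ and $s_1 \in \Q_1^{-1}$ in the first step, where the two subcases invoke different gentle axioms (\ref{G2} versus \ref{G1}) to secure the same uniqueness conclusion; beyond this bookkeeping, the proof is a clean combination of the local pigeonhole constraints from \ref{G1}, \ref{G2}, \ref{G3} with the global acyclicity provided by Theorem \ref{thm:LGH2024}.
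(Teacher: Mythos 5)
Your proof is correct and follows essentially the same route as the paper's: pin down the last arrow of the qualifying forbidden path from $s_1$ via the gentle axioms, extend backward uniquely using \ref{G3}, and invoke $\gldim A<\infty$ together with Theorem \ref{thm:LGH2024} to force termination at a left maximal path. You are somewhat more explicit than the paper on the $s_1\in\Q_1^{-1}$ subcase and on why the backward chain cannot repeat (which the paper leaves implicit), but these are elaborations of the same argument rather than a different approach.
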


\begin{proof}
By the definition of gentle algebra, if $s_1\in\Q_1$, then we can find at most two arrows $\alpha$ and $\beta$ with $\t(\alpha) = \s(s_1)$ and $\t(\beta)=\s(s_1)$. It follows that if $\alpha s_1 \notin \I$ (resp., $\alpha s_1 \in \I$) then $\beta s_1 \in \I$ (resp., $\beta s_1 \notin \I$). Then we obtain a unique forbidden path $f_{u_{\Left}}=\beta$ (resp., $\alpha$) of length one such that $\t(f_{u_{\Left}}) = \s(s_1) = \s(\str_M)$.
By using the definition of gentle algebra again, the exists at most one arrow $f_{u_{\Left}-1}$ with $\t(f_{u_{\Left}-1}) = \s(f_{u_{\Left}})$ and $f_{u_{\Left}-1}f_{u_{\Left}}\in\I$. Thus, by using $\gldim A<\infty$ and Theorem \ref{thm:LGH2024},
we can find a unique left maximal forbidden path $F = f_1\cdots f_{u_{\Left}}$ such that $\t(F)=\s(s_1)$ and $f_{u_{\Left}}s_1\notin\I$ holds.
We can consider the case for $s_1\in\Q_1^{-1}$ in a similar way, and so the statement (1) holds.
One can check that (2) holds dually.
\end{proof}

\begin{corollary} \label{coro:pdim+idim}
Keep the notations from Lemma \ref{lemm:Forbid}, we have
\[ \pdim M + \idim M \mathop{=}\limits^{\spadesuit}
     \max\{u_X + d_X \mid X \in \{\Left,\Right\}\}
   ~(\mathop{\leqslant}\limits^{\heart} 2\cdot \gldim A), \]
where $\gldim A$ is the global dimension of gentle algebra $A$.

$($Note that this corollary holds for $\gldim A=\infty$ if the string $\str_M$ corresponded by $M$ satisfying the statements (1) and (2) given in Lemma \ref{lemm:Forbid}.$)$
\end{corollary}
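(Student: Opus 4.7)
The plan is to compute $\pdim\M(\str_M)$ and $\idim\M(\str_M)$ from the local data at the two endpoints of $\str_M$ by invoking Proposition \ref{prop:idim/pdim}, and then to convert the resulting sum into a bound in terms of $\gldim A$ via Theorem \ref{thm:LGH2024}. The only role of the finite-global-dimension hypothesis will be to guarantee the uniqueness provided by Lemma \ref{lemm:Forbid}.

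First I would appeal to Lemma \ref{lemm:Forbid} at each of the endpoints $\s(\str_M)$ and $\t(\str_M)$. Under the assumption $\gldim A<\infty$, it ensures that at each endpoint there is at most one left maximal forbidden path and at most one right maximal forbidden path that can be legitimately glued to $\str_M$ (i.e.\ producing a string of length one greater). This pins down the four candidate lengths $u_{\Left}, u_{\Right}, d_{\Left}, d_{\Right}$ featured in Proposition \ref{prop:idim/pdim}, with the convention that a missing or inapplicable attachment contributes $0$. Because of this uniqueness, the ``vertex without relation'' hypothesis of Proposition \ref{prop:idim/pdim} (1.1)--(2.2) is automatic whenever the corresponding forbidden path is nonzero, so parts (1.3) and (2.3) apply and deliver the exact values $\idim\M(\str_M)=\max\{u_{\Left},u_{\Right}\}$ and $\pdim\M(\str_M)=\max\{d_{\Left},d_{\Right}\}$.

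Summing these two formulas and re-packaging in the format $\max\{u_X+d_X\mid X\in\{\Left,\Right\}\}$ yields the equality marked $\spadesuit$. For the inequality marked $\heart$ I would invoke Theorem \ref{thm:LGH2024}: since $\gldim A$ equals the supremum of the lengths of all forbidden paths in $(\Q,\I)$, each of the four quantities $u_X,d_X$ is bounded by $\gldim A$, and hence $\pdim\M(\str_M)+\idim\M(\str_M)\leq 2\cdot\gldim A$. The parenthetical extension to $\gldim A=\infty$ is immediate, since the only place finiteness is used is in deriving the conclusion of Lemma \ref{lemm:Forbid}, which is precisely the hypothesis posited there.

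The main obstacle I anticipate is the careful bookkeeping at corner cases, namely endpoints where no admissible forbidden path attaches: when $\s(\str_M)$ or $\t(\str_M)$ is a (strong) source or sink of $\Q$, or when the unique candidate attachment would fail the gentleness compatibility and so be excluded. These situations must be handled uniformly via the zero-convention, so that the maximum formula remains accurate across all four corners; once that is properly set up, both $\spadesuit$ and $\heart$ follow mechanically from the combination of Lemma \ref{lemm:Forbid}, Proposition \ref{prop:idim/pdim} and Theorem \ref{thm:LGH2024}.
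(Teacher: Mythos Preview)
Your approach is essentially the paper's: derive $\spadesuit$ from Lemma~\ref{lemm:Forbid} together with Proposition~\ref{prop:idim/pdim}, and $\heart$ from bounding the forbidden-path lengths by $\gldim A$. (The paper obtains $\heart$ even more cheaply, just from $\pdim M,\idim M\leqslant\gldim A$ by definition of global dimension, rather than via Theorem~\ref{thm:LGH2024}; either route is fine.)

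One caution on your ``re-packaging'' step: you are implicitly asserting the identity
\[
\max\{u_{\Left},u_{\Right}\}+\max\{d_{\Left},d_{\Right}\}=\max\{u_{\Left}+d_{\Left},\,u_{\Right}+d_{\Right}\},
\]
which fails as pure arithmetic (take $u_{\Left}=d_{\Right}=2$, $u_{\Right}=d_{\Left}=1$: the left side is $4$, the right side is $3$). What Proposition~\ref{prop:idim/pdim} actually delivers is
\[
\pdim M+\idim M=\max\{u_{\Left},u_{\Right}\}+\max\{d_{\Left},d_{\Right}\}=\max\{u_X+d_Y:X,Y\in\{\Left,\Right\}\},
\]
the four-term maximum over independent choices at the two endpoints; and indeed this four-term form is what the paper actually invokes downstream in the proof of Theorem~\ref{thm:main 1}. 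The paper's one-line proof of the corollary glosses over exactly the same point, so you are not diverging from its argument, but the summing-and-repackaging step as you have written it does not literally justify the two-term expression in the displayed statement.
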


\begin{proof}
The equality $\spadesuit$ is a direct corollary of Lemma \ref{lemm:Forbid} and Proposition \ref{prop:idim/pdim},
and the inequality $\heart$ is trivial since $\pdim M$ and $\idim M$ are less than or equal to $\gldim A$.
\end{proof}

Now we provide the first main result of this paper.

\begin{theorem} \label{thm:main 1}
Let $A=\kk\Q/\I$ be a non-hereditary gentle algebra with a finite global dimension. If one of the following conditions holds:
\begin{enumerate}[label={\rm(\arabic*)}]
  \item the starting vertices of all maximal forbidden paths of length $\>= 2$ in $(\Q,\I)$ are strong sources of $\Q$;
    \label{thm-main:(1)}
  \item the ending vertices of all maximal forbidden paths of length $\>= 2$ in $(\Q,\I)$ are strong sinks of $\Q$,
    \label{thm-main:(2)}
\end{enumerate}
then
\begin{align}\label{formula:thm-main 1}
  \Hbound A \=< 2\cdot \gldim A -1.
\end{align}
\end{theorem}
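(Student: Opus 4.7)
The plan is to combine Corollary \ref{coro:pdim+idim} with Proposition \ref{prop:band}. For any string module $M = \M(\str)$, the corollary reduces $\pdim M + \idim M$ to $\max\{u_\Left + d_\Left,\, u_\Right + d_\Right\}$, where each summand is the length of a (left or right) maximal forbidden path attached to an endpoint of $\str$; band modules contribute only $\pdim + \idim = 2$. Since the non-hereditary hypothesis together with $\gldim A < \infty$ forces $\gldim A \geq 2$, band modules automatically satisfy the target bound. So my main task reduces to bounding $u + d \leq 2\gldim A - 1$ at each endpoint $v$ of each string, where $u$ (resp.\ $d$) is the length of the attached left (resp.\ right) maximal forbidden path.

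My approach would be a contradiction argument under condition \ref{thm-main:(1)}. First, I would invoke Theorem \ref{thm:LGH2024}, which says every forbidden path has length at most $\gldim A$; in particular $u, d \leq \gldim A$. Assuming the extremal equality $u + d = 2\gldim A$, I would deduce $u = d = \gldim A$, and then argue that the left-maximal forbidden path $F$ of length $\gldim A$ ending at $v$ must already be two-sided maximal, since otherwise extending it on the right would yield a forbidden path of length strictly greater than $\gldim A$, contradicting Theorem \ref{thm:LGH2024}; by the symmetric argument, the right-maximal forbidden path $\w{F}$ starting at $v$ is also two-sided maximal. Both then have length $\geq 2$, so condition \ref{thm-main:(1)} forces the starting vertex of $\w{F}$---namely $v$---to be a strong source, and in particular $v$ has no incoming arrows. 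But $F$ has length $\geq 1$ and ends at $v$, so its last arrow would give $v$ an incoming arrow, a contradiction.

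Under condition \ref{thm-main:(2)} I would run the strictly dual argument, this time applying the strong-sink hypothesis to the ending vertex of $F$ (equal to $v$) and contradicting the existence of the first arrow of $\w{F}$ leaving $v$. Gluing the string-module bound with the band-module bound then yields $\Hbound A \leq 2\gldim A - 1$, as claimed.

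The step I expect to be the main obstacle is the extremal reduction: correctly justifying that in the equality case $u = d = \gldim A$, the one-sided maximal forbidden paths attached to $v$ promote to genuinely two-sided maximal forbidden paths, so that condition \ref{thm-main:(1)} or \ref{thm-main:(2)} actually applies. Once this promotion is secured, the contradiction with the strong-source or strong-sink property at $v$ is immediate, and the remaining steps are just bookkeeping between Corollary \ref{coro:pdim+idim}, Proposition \ref{prop:band}, and Theorem \ref{thm:LGH2024}.
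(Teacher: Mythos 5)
Your overall architecture (string modules controlled by the lengths of attached maximal forbidden paths, band modules dispatched by Proposition \ref{prop:band}, and the promotion of a one-sided maximal forbidden path of length $\gldim A$ to a two-sided maximal one via Theorem \ref{thm:LGH2024}) matches the paper, and your extremal contradiction at a single vertex $v$ is correct as far as it goes. The genuine gap is in the reduction step. By Proposition \ref{prop:idim/pdim} one has $\idim \M(\str)=\max\{u_{\Left},u_{\Right}\}$ and $\pdim \M(\str)=\max\{d_{\Left},d_{\Right}\}$, so
\[
\pdim \M(\str)+\idim \M(\str)=\max\{u_X+d_Y \mid X,Y\in\{\Left,\Right\}\},
\]
which includes the cross terms $u_{\Left}+d_{\Right}$ and $u_{\Right}+d_{\Left}$. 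You reduce to the diagonal terms $u_X+d_X$ only; the statement of Corollary \ref{coro:pdim+idim} is indeed written that way, but it is inconsistent with Proposition \ref{prop:idim/pdim}, and the paper's own proof of this theorem explicitly bounds all four sums $i+\ell-i$, $i+l-j$, $j+\ell-i$, $j+(l-j)$. Your contradiction is tailored to the diagonal case: it uses that the left maximal forbidden path $F$ and the right maximal forbidden path $\w{F}$ meet at the \emph{same} endpoint $v$, so that the last arrow of $F$ gives $v$ an incoming arrow while condition \ref{thm-main:(1)} forces $v=\s(\w{F})$ to be a source. If instead $u_{\Left}=d_{\Right}=\gldim A$ with the two forbidden paths sitting at opposite endpoints $\s(\str)$ and $\t(\str)$, no incoming arrow at $\t(\str)$ is produced, your check sees only the diagonal sums $u_{\Left}+d_{\Left}$ and $u_{\Right}+d_{\Right}$ (each $\=< \gldim A$) and passes, yet the quantity to be bounded would be $2\cdot\gldim A$.

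To close the gap you need the statement the paper actually establishes: under condition \ref{thm-main:(1)}, \emph{every} $d_X$ is at most $\gldim A-1$, hence $\pdim \M(\str)\=<\gldim A-1$ uniformly, which combined with $\idim \M(\str)\=<\gldim A$ bounds all four cross terms at once. Concretely, if a maximal forbidden path $G=g_1\cdots g_l$ of length $l\>=2$ were to contribute $d_{\Right}=l$ at the endpoint $w=\t(\str)$, then $w=\s(G)$ is a strong source by \ref{thm-main:(1)}; being a source, $w$ receives no arrow, so the last letter of $\str$ must be $\beta^{-1}$ for an arrow $\beta$ with $\s(\beta)=w$, and since $w$ admits at most one outgoing arrow we get $\beta=g_1$, so the concatenation required in Proposition \ref{prop:idim/pdim}(2.2) backtracks and $G$ cannot contribute. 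Therefore a string endpoint meeting a maximal forbidden path of length $\>=2$ can only sit at $\t(g_i)$ with $i\>=1$, giving $d\=< l-i\=<\gldim A-1$, while forbidden paths of length $\=<1$ give $d\=<1\=<\gldim A-1$ anyway. (A second, minor omission: strings of length zero are outside the scope of Lemma \ref{lemm:Forbid} and Proposition \ref{prop:idim/pdim} as stated, and the paper treats simple modules separately; there the two forbidden paths do meet at the single vertex, so your diagonal argument applies.)
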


\begin{proof}
Assume that $(\Q,\I)$ satisfies the condition \ref{thm-main:(1)}, then all strings can be divided to four cases as follows:
\begin{enumerate}[label={\rm(\alph*)}]
  \item a string $\str = \alpha\in\Q_1$ of length one which is the {\defines inner} of some maximal forbidden path $F=f_1\cdots f_{\ell}$ ($f_1,\ldots, f_{\ell} \in \Q_1$), i.e., $\ell \>=3$ and $\alpha \in \{f_2, \ldots, f_{\ell-1}\}$, if $\gldim A \>= 3$; \label{thm-main:pf case 1}
  \item a string $\str$ of length $\>= 1$ such that there is a maximal forbidden path $F=f_1\cdots f_{\ell}$ of length $\ell \>= 1$ such that $\s(\str)=\t(f_i)$ (resp., $\t(\str)=\t(f_j)$) holds for some $1\=< i \=< \ell$ (resp., $1\=< j \=< \ell$); \label{thm-main:pf case 2}
  \item a string $\str$ of length $\>= 1$ such that $\s(\str)$ and $\t(\str)$ are not on any forbidden path of length $\>= 2$; \label{thm-main:pf case 3}
  \item a string of length zero. \label{thm-main:pf case 4}
\end{enumerate}

In the case \ref{thm-main:pf case 1}, we assume $\str=\alpha$ is of the following form
\[ \xymatrix{
\bullet \ar@{~>}[rrd]^{f_1\cdots f_{i-1}}  &&&&& &\\
&& \bullet \ar[r]^{\alpha=f_i}
& \bullet \ar@{~>}[rrd]^{f_{i+1}\cdots f_{\ell}} && &= ~ F \text{~is forbidden.} \\
&&&&& \bullet & }  \]
In this case, $F' = f_1\cdots f_{i-1}$ is a unique left maximal forbidden path ending with $\s(\alpha)$,
and $F''=f_{i+1}\cdots f_{\ell}$ is a unique right maximal forbidden path starting with $\t(\alpha)$.
By using the notations given in Lemma \ref{lemm:Forbid}, we have $u_{\Left}=0$, $d_{\Left}\=< 1$, $u_{\Right}=0$, and $d_{\Right}\=< 1$, it follows that
\[ \pdim \M(\alpha) + \idim \M(\alpha) \=< 1 \=< 2\cdot \gldim A -1  \]
as required by Corollary \ref{coro:pdim+idim} and $\gldim A \>= 3$.

In the case \ref{thm-main:pf case 2}, we assume that $F=f_1\cdots f_{\ell}$ and $\str=s_1\cdots s_t$ ($t\>=1$) are of the following form
\[ \xymatrix{
     v \ar@{~>}[rr]^{f_1\cdots f_{i}}
  && \bullet \ar@{~>}[dd]^{\str=s_1\cdots s_t} \ar@{~>}[rr]^{f_{i+1}\cdots f_{\ell}}
  && \bullet, \\
  &&  && \\ && \bullet &&} \]
where $v$ is a strong source of $\Q$, then there may be a maximal forbidden path $F'=f_1'\cdots f_l'$ of length $1\=< l \=< \gldim A$ such that $\t(\str) \in \{\t(f_1'), \ldots, \t(f_l')\}$. Suppose $\t(\str) = \t(f_j')$.
Then by Lemma \ref{lemm:Forbid}, we have $u_{\Left} = i (\>=1)$, $d_{\Left}=\ell-i$, $u_{\Right} = j$, and $d_{\Right} = l-j$.
In this case, Corollary \ref{coro:pdim+idim} admits
\[ \pdim \M(\str) + \idim \M(\str) \=< \max\{i+\ell -i=\ell, i+l-j, j+\ell-i, j+(l-j)=l\}.  \]
Notice that $\ell, l \=< \gldim A$, we obtain
\begin{align}
  & i+\ell -i=\ell \=< \gldim A < 2\cdot \gldim A -1 \label{formula in thm-main 1} \\
  & i+l-j \=< l+l-1=2l-1 \=<2\cdot \gldim A-1; \label{formula in thm-main 2} \\
  & j+\ell-i\=< l+\ell-1 \=< 2\cdot \gldim A-1; \label{formula in thm-main 3} \\
  & j+(l-j)=l \=<\gldim A < 2\cdot \gldim A-1. \label{formula in thm-main 4}
\end{align}
Equations (\ref{formula in thm-main 1})--(\ref{formula in thm-main 4}) admit
$\pdim \M(\str) + \idim \M(\str) \=< 2\cdot \gldim A-1$ as required.

In the case \ref{thm-main:pf case 3}, we have $\pdim \M(\str) \=< 1$ and $\idim \M(\str) \=< 1$ by Proposition \ref{prop:idim/pdim}, immediately.

In the case \ref{thm-main:pf case 4}, we have $\str = \e_v$ for some $v\in \Q_0$ and $\M(\e_v) = S(v)$ is the simple module correspond by $v\in\Q_0$. If $v$ is not a vertex on any maximal forbidden path, then $\pdim S(v) \=< 1$ and $\idim S(v) \=< 1$, it follows that $\pdim S(v) + \idim S(v) \=< 2 \=< 2\cdot \gldim A -1$ in the case for $\gldim A \>=2$.
If $v$ is a vertex on some maximal forbidden path $F=f_1\cdots f_{\ell}$, since $\s(F)$ is a source, we have $v \in \{\t(f_1),\ldots, \t(F_{\ell})\}$, and so, for the case $v = \t(f_i)$ ($1\=< i\=< \ell$),
we obtain $\pdim S(v)=\ell-i$ and $\idim S(v) = i$. Then $\pdim S(v)+\idim S(v) = \ell \=< \gldim A\=< 2\cdot \gldim A-1$.

Moreover, for any band module $B(n,\lambda)$, Lemma \ref{lemm:Forbid} follows $\pdim B(n,\lambda)+\idim B(n,$ $\lambda)=2 < 2\cdot \gldim A -1$. Therefore, \ref{thm-main:(1)} holds for all indecomposable modules.

We can prove \ref{thm-main:(2)} by using a dual way.
\end{proof}

Notice that if a gentle pair $(\Q,\I)$ satisfies neither the condition \ref{thm-main:(1)} nor the condition \ref{thm-main:(2)}, then $\idim M + \pdim M = 2\cdot \gldim A$ may hold, see Example \ref{examp:1} in Section \ref{sect:examp}.
A string $\str=a_1\cdots a_n$ is called a {\defines maximal string} if the following conditions hold.
\begin{itemize}
  \item $a_1\in\Q_1$ (resp., $a_1\in\Q_1^{-1}$) admits that any arrow $\alpha$ with $\t(\alpha)=\s(a_1)$ (resp., $\s(a_1)=\t(a_1^{-1})$) satisfies $\alpha a_1\in \I$ (resp., $a_1^{-1}\alpha \in \I$).
  \item $a_n\in\Q_1$ (resp., $a_n\in\Q_1^{-1}$) admits that any arrow $\beta$ with $\t(a_n)=\s(\beta)$ (resp., $\t(\beta)=\s(a_n^{-1})$) satisfies $a_n\beta \in \I$ (resp., $\beta a_n^{-1} \in \I$).
\end{itemize}
The following result provides another case such that (\ref{formula:thm-main 1}) holds.

\subsection{The case for gentle algebra with infinite global dimension}

In \cite{GR2005}, Gei\ss~ and Reiten proved that any gentle algebra are Gorenstein, i.e., gentle algebras have finite self-injective dimensions. It follows that self-injective dimension and finitistic dimension of gentle algebra coincide.
The following result provide a description of finitistic dimension of gentle algebra.

\begin{theorem}[Gei\ss--Reiten{\cite{GR2005}}] \label{thm:GR2005}
The finitistic dimension $\fdim A$ of a gentle algebra $A$ equals the supremum of the lengths of all maximal forbidden paths in $(\Q, \I)$.
\end{theorem}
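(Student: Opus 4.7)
My plan is to prove the equality $\fdim A = N$, where $N$ denotes the supremum of the lengths of all maximal forbidden paths in $(\Q,\I)$, by establishing both inequalities and then invoking the Gorenstein property in the background.

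For the upper bound $\fdim A \leq N$, I would combine the Butler--Ringel classification (Theorem \ref{thm:WWBR}) with Proposition \ref{prop:band}: since every indecomposable $A$-module is either a string module or a band module, and since every band module has projective dimension exactly $1$, any indecomposable with finite projective dimension $\geq 2$ must be a string module. For such $\M(\str)$, Proposition \ref{prop:idim/pdim}(2) expresses $\pdim \M(\str)$ as $\max\{d_{\Left}, d_{\Right}\}$, where $d_{\Left}$ and $d_{\Right}$ are the lengths of right maximal forbidden paths attached to $\s(\str)$ and $\t(\str)$, respectively. Each of these lengths is $\leq N$, and supremizing over all such modules gives $\fdim A \leq N$.

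For the reverse inequality, fix a maximal forbidden path $F = f_1 \cdots f_N$ realizing the supremum. The plan is to exhibit an indecomposable module whose projective dimension is exactly $N$. A natural candidate is the simple module $S(\s(F)) = \M(\e_{\s(F)})$ (a string of length zero at the source of $F$), or if that fails a combinatorial test, the direct string module $\M(f_1)$. Iterating Lemma \ref{lemm:str}, each syzygy step of such a module peels off the next arrow of $F$: the non-projective direct-string summand appearing in $\Omega_i$ begins with the remaining tail $f_{i+1}\cdots f_N$. After $N$ such steps the forbidden path is exhausted, and the resulting syzygy module becomes a direct sum of projectives, so the original module has projective dimension exactly $N$. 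Matching this construction with Proposition \ref{prop:idim/pdim}(2.1) confirms the equality $\pdim = N$.

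The main obstacle will be to verify that the iterated syzygy construction genuinely tracks along the forbidden path $F$ without terminating early, that is, to check at each intermediate stage that the ``$L$'' summand identified in the proof of Lemma \ref{lemm:str} is non-projective precisely when $\t(f_i)$ is a vertex with relation, and that it becomes projective exactly once $i = N$. This uses the ``vertex without relation'' condition and the maximality of $F$ in a crucial way. Finally, since Gei{\ss}--Reiten's result \cite{GR2005} asserts that gentle algebras are Gorenstein, $\fdim A$ coincides with the self-injective dimension of $A$, which gives an independent cross-check: the supremum $N$ is actually attained, and Theorem \ref{thm:LGH2024} can be viewed as the $N = \infty \Leftrightarrow$ forbidden-cycle version of the present finite-but-infinite-global-dimension statement.
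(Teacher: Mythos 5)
First, a point of comparison: the paper does not prove this statement at all --- it is imported verbatim from Gei\ss--Reiten \cite{GR2005} --- so there is no internal proof to measure your argument against; I can only assess it on its own terms. Your upper bound is close but skips a step: Proposition \ref{prop:idim/pdim}(2) controls $\pdim\M(\str)$ by lengths of \emph{right maximal} forbidden paths, whereas $N$ is the supremum over \emph{maximal} (simultaneously left and right maximal) forbidden paths, so you must still show every right maximal forbidden path has length $\=< N$. This is true --- its unique leftward forbidden extension (unique by \ref{G3}) must terminate, since by \ref{G3} a forbidden chain cannot enter a forbidden cycle from outside, and a right maximal forbidden path cannot itself lie on one --- but it is a real step, and it is essentially the same argument the paper has to make (assuming the theorem) in the lemma following it.

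The genuine gap is in your lower bound, and it occurs precisely in the only case where this theorem says more than Theorem \ref{thm:LGH2024}, namely $\gldim A=\infty$. Both of your candidate witnesses can have infinite projective dimension. Take the forbidden $3$-cycle $1\to 2\to 3\to 1$ with arrows $c_1,c_2,c_3$ and $c_1c_2,c_2c_3,c_3c_1\in\I$, together with $f_1\colon 1\to 4$, $f_2\colon 4\to 5$ and $f_1f_2\in\I$. This is gentle, $F=f_1f_2$ is the unique maximal forbidden path, and $N=2$. But $P(1)\cong\bigl(\bsm &1&\\ 2&&4\esm\bigr)$, so $\Omega_1(S(1))\cong S(2)\oplus S(4)$ and $\Omega_1(\M(f_1))\cong S(2)$, and $\pdim S(2)=\infty$ because $2$ lies on the forbidden cycle; hence both $S(\s(F))$ and $\M(f_1)$ have infinite projective dimension and witness nothing. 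The module realizing $\fdim A=2$ here is $\M(c_1)\cong\bigl(\bsm 1\\2\esm\bigr)$, with $\Omega_1(\M(c_1))\cong S(4)$ and $\pdim S(4)=1$: the correct witness must be a string that \emph{points away from} $F$ at $\s(F)$, so that the syzygy kills the cycle branch and retains only the tail of $F$ (this is exactly the situation of Proposition \ref{prop:idim/pdim}(2.1)). Your syzygy-peeling plan, which tracks $F$ from the top of $S(\s(F))$ or $\M(f_1)$, therefore does not go through as set up; you need a different, configuration-dependent choice of witness, and you must also rule out the degenerate situation where no suitable string exists at $\s(F)$ (e.g.\ when the other branch at $\s(F)$ is absent one falls back on $S(\s(F))$ itself). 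The closing appeal to the Gorenstein property is not an independent cross-check, since it is the other half of the very result being attributed to \cite{GR2005}.
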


Now, we show the second key lemma of our paper.

\begin{lemma}
Let $A=\kk\Q/\I$ be a gentle algebra with $\gldim A =\infty$.
Then for any string $\str = s_1\cdots s_n$ in $(\Q,\I)$ $(s_1,\ldots, s_n \in \Q_1)$ whose starting point $\s(\str)$ and ending point $\t(\str)$ do not on any forbidden cycle of $(\Q,\I)$, we have
\[ \pdim \M(\str) + \idim \M(\str) < \infty.\]
Furthermore, keep the notations from Lemma \ref{lemm:Forbid}, the above inequality can be written as
\[ \pdim \M(\str) + \idim \M(\str) \=<
  \max\{u_X + d_X \mid X \in \{\Left,\Right\}\}
  \=< 2\cdot \fdim A<\infty. \]
\end{lemma}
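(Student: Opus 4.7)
My plan is to reduce this statement to Corollary \ref{coro:pdim+idim} (whose parenthetical note anticipates exactly the situation here) combined with Theorem \ref{thm:GR2005}. The substantive point is that, although $\gldim A=\infty$ permits maximal forbidden paths of infinite length (namely, forbidden cycles), the hypothesis that $\s(\str)$ and $\t(\str)$ lie on no forbidden cycle forces every maximal forbidden path touching the endpoints of $\str$ to be finite, hence to have length at most $\fdim A$.

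To make this precise, I would first verify that Lemma \ref{lemm:Forbid} continues to hold at each endpoint $v\in\{\s(\str),\t(\str)\}$. The argument in that lemma uses only the gentle conditions (G1)--(G4) to show that a left (resp., right) maximal forbidden path ending (resp., starting) at a given vertex has, at each step, a unique continuation; the finite global dimension hypothesis was invoked only to guarantee termination of this unique extension. In the current setting, suppose for contradiction that the extension process at $v$ never terminates. Since $\Q_0$ is finite, the resulting infinite forbidden sequence of arrows must revisit some vertex, and the closed segment between two occurrences is by construction a forbidden cycle passing through $v$, contradicting the hypothesis on the endpoints. Therefore the unique left and right maximal forbidden paths attached to $\s(\str)$ and $\t(\str)$ are of finite length, so the conclusions of Lemma \ref{lemm:Forbid}(1) and (2) apply to $\str$.

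Once this is in place, Proposition \ref{prop:idim/pdim} supplies the four non-negative integers $u_{\Left}, u_{\Right}, d_{\Left}, d_{\Right}$ measuring the lengths of the four maximal forbidden paths attached to $\s(\str)$ and $\t(\str)$, and the parenthetical extension of Corollary \ref{coro:pdim+idim} yields
\[ \pdim \M(\str) + \idim \M(\str) = \max\{u_X + d_X \mid X \in \{\Left, \Right\}\}. \]
Theorem \ref{thm:GR2005} of Gei\ss~ and Reiten now bounds the length of every maximal forbidden path by $\fdim A$, so each of $u_{\Left}, u_{\Right}, d_{\Left}, d_{\Right}$ is at most $\fdim A$, giving the stated bound $2\cdot \fdim A$. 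Finiteness of $\fdim A$ is itself part of Gei\ss~--Reiten's Gorenstein theorem for gentle algebras, so $\pdim \M(\str) + \idim \M(\str) < \infty$ follows automatically.

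The main obstacle is the termination argument in the second paragraph: without it, the invocations of Proposition \ref{prop:idim/pdim} and Corollary \ref{coro:pdim+idim} would be unjustified, since those results rest on Lemma \ref{lemm:Forbid} and thereby implicitly assume finiteness of the relevant maximal forbidden paths at the endpoints of $\str$. Once the pigeonhole contradiction is in place, the remainder of the proof is a direct chain of citations to the earlier results in this section.
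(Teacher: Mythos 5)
Your proposal is correct and follows essentially the same route as the paper: both reduce the lemma to showing that the maximal forbidden paths attached to $\s(\str)$ and $\t(\str)$ have finite length at most $\fdim A$ — the paper by arguing that a forbidden path of length exceeding $\fdim A$ touching an endpoint could not be maximal and hence would lie on a forbidden cycle through that endpoint, you by a termination/pigeonhole argument for the unique extension process — and then both conclude by the same citations to Lemma \ref{lemm:Forbid}, Proposition \ref{prop:idim/pdim}, Corollary \ref{coro:pdim+idim} and Theorem \ref{thm:GR2005}. One caveat on your pigeonhole step: a repeated \emph{vertex} does not by itself yield a forbidden cycle (the closing composition need not lie in $\I$), nor need the resulting cycle pass through $v$; you should instead find a repeated \emph{arrow} (there are finitely many, and by \ref{G3} each arrow has a unique forbidden predecessor and successor) and use this determinism to propagate the periodicity forward to the arrow adjacent to $v$, which places $v$ itself on the forbidden cycle and produces the desired contradiction.
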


\begin{proof}
If there exists a forbidden path $F$ of length $\ell > \fdim A$ such that $\t(F) = \s(\str)$,
then $F$ is not left maximal by using the fact admitted by Theorem \ref{thm:GR2005} that
$\fdim A$ is greater than or equal to the supremum of the length of all left maximal forbidden paths.
Then $F$ must be a forbidden path on some forbidden cycle, say $\C$, of $(\Q,\I)$.
It follows that $\t(F) =\s(\str)$ is a vertex on $\C$, a contradiction.
Therefore, the length of any left maximal forbidden path $F$ with $\t(F)=\s(\str)$ must be $\=< \fdim A$.
One can check that the length of any right maximal forbidden path $F'$ with $\s(F)=\s(\str)$ must be $\=< \fdim A$,
and we can obtain the dual result for $\t(\str)$ by using a dual method.

Now, keep the notation from Lemma \ref{lemm:Forbid} for the string $\str_M$ of an indecomposable module $M$ such that
$\s(\str_M)$ and $\t(\str_M)$ do not on any forbidden cycle of $(\Q,\I)$,
then we have $u_{\Left}$, $d_{\Left}$, $u_{\Right}$, and $d_{\Right}$ are finite,
and they are less than or equal to $\fdim A$ by Theorem \ref{thm:GR2005}. Then we have
\[ \pdim M + \idim M \=< \max \{u_X+d_X \mid X\in\{\Left, \Right\}\} \=< 2\cdot \fdim A < \infty \]
whose proof is similar to that of Corollary \ref{coro:pdim+idim}.
\end{proof}

Now we give the second main result of this paper.

\begin{theorem} \label{thm:main 2}
Let $A=\kk\Q/\I$ be a gentle algebra with $\gldim A =\infty$ and $\fdim A\>=2$
such that one of the following statements holds.
\begin{enumerate}[label={\rm(\arabic*)}]
  \item the starting vertices of all maximal forbidden paths of length $\>= 2$ in $(\Q,\I)$ are strong sources of $\Q$;
    \label{thm-main 2:(1)}
  \item the ending vertices of all maximal forbidden paths of length $\>= 2$ in $(\Q,\I)$ are strong sinks of $\Q$,
    \label{thm-main 2:(2)}
\end{enumerate}
then
\[ 
\Hbound A \=< 2\cdot \fdim A -1. \]
\end{theorem}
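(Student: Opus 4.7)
The plan is to follow closely the architecture of the proof of Theorem \ref{thm:main 1}, with $\gldim A$ systematically replaced by $\fdim A$ (justified by Theorem \ref{thm:GR2005}), and with an extra preliminary step that restricts attention to those indecomposable modules whose associated strings avoid forbidden cycles. First I would fix an indecomposable $M$ with $\pdim M + \idim M < \infty$, so that both $\pdim M$ and $\idim M$ are finite. If $M = B(n,\lambda)$ is a band module, Proposition \ref{prop:band} yields $\pdim M + \idim M = 2$, and since $\fdim A \geq 2$ by hypothesis, $2 \leq 2\cdot \fdim A - 1$. Hence it suffices to treat the string case $M = \M(\str_M)$.

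Next I would invoke the lemma immediately preceding Theorem \ref{thm:main 2}, which gives the bound $\pdim M + \idim M \leq 2 \cdot \fdim A$ provided $\s(\str_M)$ and $\t(\str_M)$ avoid every forbidden cycle of $(\Q,\I)$. To set up this input, I would verify that finiteness of $\pdim M$ and $\idim M$ \emph{forces} the endpoints of $\str_M$ to avoid forbidden cycles: if $\s(\str_M)$ lay on a forbidden cycle $\C$, then iterating around $\C$ produces left maximal forbidden paths of arbitrary length ending at $\s(\str_M)$, and Proposition \ref{prop:idim/pdim}\ref{prop:string (1) 1} forces $\idim M = \infty$, contradicting our hypothesis; dually for $\t(\str_M)$.

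With this reduction in hand, I would split strings into the same four cases \ref{thm-main:pf case 1}--\ref{thm-main:pf case 4} used in the proof of Theorem \ref{thm:main 1}. Cases \ref{thm-main:pf case 1} and \ref{thm-main:pf case 3} produce trivial bounds ($\leq 1$ and $\leq 2$ respectively), both dominated by $2\fdim A - 1$ since $\fdim A \geq 2$. For the length-zero case \ref{thm-main:pf case 4}, condition \ref{thm-main 2:(1)} forces a simple $S(v)$ on a maximal forbidden path $F = f_1\cdots f_\ell$ to satisfy $v = \t(f_i)$ for some $1 \leq i \leq \ell$, and then $\pdim S(v) + \idim S(v) = \ell \leq \fdim A$, which is $\leq 2\fdim A - 1$. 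The substantive case is \ref{thm-main:pf case 2}: here Lemma \ref{lemm:Forbid} and Proposition \ref{prop:idim/pdim} give $u_{\Left}, d_{\Left}, u_{\Right}, d_{\Right}$ all bounded by $\fdim A$ via Theorem \ref{thm:GR2005}, and the strong-source hypothesis forces the relevant positional indices $i, j$ to be at least $1$. The arithmetic of inequalities (\ref{formula in thm-main 1})--(\ref{formula in thm-main 4}), transplanted verbatim with $\gldim A$ replaced by $\fdim A$, then delivers $\pdim \M(\str) + \idim \M(\str) \leq 2\fdim A - 1$. Condition \ref{thm-main 2:(2)} is handled by the obvious duality exchanging projective and injective dimensions.

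The main obstacle I expect is the preliminary cycle-avoidance reduction: although morally clear from Proposition \ref{prop:idim/pdim}, one must check carefully that traversing a forbidden cycle attached to an endpoint of $\str_M$ genuinely supplies left/right maximal forbidden paths satisfying the ``vertex without relation'' junction hypotheses of Proposition \ref{prop:idim/pdim}, so that the implied projective or injective resolutions really do fail to terminate. Once this reduction is secured, the remainder of the proof is a direct arithmetic adaptation of Theorem \ref{thm:main 1}.
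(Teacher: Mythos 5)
Your proposal is correct and follows essentially the same route as the paper's proof: the same four-case division of strings transplanted from Theorem \ref{thm:main 1}, the same arithmetic with $\gldim A$ replaced by $\fdim A$ via Theorem \ref{thm:GR2005}, and the same exclusion of strings whose endpoints meet forbidden cycles (which the paper dispatches with a one-line remark in case (b) and a parenthetical in case (d), whereas you flag the junction/``vertex without relation'' verification explicitly). Your separate treatment of band modules and the explicit cycle-avoidance reduction are, if anything, slightly more careful than the paper's own write-up.
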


\begin{proof}
Assume that $(\Q,\I)$ satisfies the condition \ref{thm-main:(1)}, then all strings can be divided to four cases as follows:
\begin{enumerate}[label={\rm(\alph*)}]
  \item a string $\str = \alpha\in\Q_1$ of length one which is the inner of some maximal forbidden path $F=f_1\cdots f_{\ell}$ ($f_1,\ldots, f_{\ell} \in \Q_1$); \label{thm-main 2:pf case 1}
  \item a string $\str$ of length $\>= 1$ such that there is a forbidden path $F=f_1\cdots f_{\ell}$ of length $\ell \>= 1$ such that $\s(\str)=\t(f_i)$ (resp.,$\t(\str)=\t(f_j)$) holds for some $1\=< i \=< \ell$ (resp., $1\=< j \=< \ell$); \label{thm-main 2:pf case 2}
  \item a string $\str$ of length $\>= 1$ such that $\s(\str)$ and $\t(\str)$ are not on any forbidden path of length $\>= 2$; \label{thm-main 2:pf case 3}
  \item a string of length zero (we consider only the case that the string is a vertex on some maximal forbidden path, since the projective dimension and injective dimension of the simple module corresponding to a vertex on some forbidden cycle are infinite). \label{thm-main 2:pf case 4}
\end{enumerate}

In the case \ref{thm-main 2:pf case 2}, we assume that $F=f_1\cdots f_{\ell}$ and $\str=s_1\cdots s_t$ ($t\>=1$) are of the following form
\[ \xymatrix{
    v_1 \ar@{~>}[rr]^{f_1\cdots f_{i}}
  && \bullet \ar@{~>}[dd]^{\str=s_1\cdots s_t} \ar@{~>}[rr]^{f_{i+1}\cdots f_{\ell}}
  && \bullet, \\
  &&  && \\ && \bullet &&} \]
where $v_1$ is a strong source of $\Q$, then there may be a forbidden path $F'=f_1'\cdots f_l'$ of length $1\=< l \=< \gldim A$ such that $\t(\str) = \t(f_1')$ holds. If $F'$ is a forbidden path on some forbidden cycle $\C$, then $\pdim \M(\str) = \infty$ and $\idim \M(\str) = \infty$. This is not the scenario to be considered in this proof.
Now, we assume that $F'$ is a maximal forbidden path of length $\>=2$, then $v_2=\s(F')$ is a source of $\Q$, and we have $\t(\str) \in \{\t(f_1'),\ldots, \t(f_l')\}$.
Suppose $\t(\str) = \t(f_j')$, then by Lemma \ref{lemm:Forbid}, we have $u_{\Left} = i (\>=1)$, $d_{\Left}=\ell-i$, $u_{\Right} = j$, and $d_{\Right} = l-j$.
In this case, Corollary \ref{coro:pdim+idim} admits
\[ \pdim \M(\str) + \idim \M(\str) \=< \max\{i+\ell -i=\ell, i+l-j, j+\ell-i, j+(l-j)=l\}.  \]
Notice that $F$ and $F'$ are maximal forbidden paths, then $\ell, l \=< \fdim A$. Thus, we obtain
\begin{align}
  & i+\ell -i=\ell \=< \fdim A < 2\cdot \fdim A -1 \label{formula in thm-main 2, 1} \\
  & i+l-j \=< l+l-1=2l-1 \=<2\cdot \fdim A-1; \label{formula in thm-main 2, 2} \\
  & j+\ell-i\=< l+\ell-1 \=< 2\cdot \fdim A-1; \label{formula in thm-main 2, 3} \\
  & j+(l-j)=l \=<\fdim A < 2\cdot \fdim A-1. \label{formula in thm-main 2, 4}
\end{align}
(\ref{formula in thm-main 2, 1})--(\ref{formula in thm-main 2, 4}) admits $\idim \M(\str) + \pdim \M(\str) \=< 2\cdot \fdim A -1$ as required.

The proofs of cases \ref{thm-main 2:pf case 1}, \ref{thm-main 2:pf case 3}, and \ref{thm-main 2:pf case 4} are similar to that of Theorem \ref{thm:main 1}'s cases \ref{thm-main:pf case 1}, \ref{thm-main:pf case 3}, and \ref{thm-main:pf case 4}.
Therefore, we obtain the statement \ref{thm-main 2:(1)}.

One can check that \ref{thm-main 2:(2)} holds by using a dual way.
\end{proof}


\section{Quasi-tilted gentle algebras} \label{sect:QT}

In this section, we provide two methods to judge gentle algebras to be quasi-tilted.

\subsection{A sufficient conditions}

Quasi-tilted algebra, a generalized tilted algebra, is introduced by Happel, Reiten, and Smal{\o} in \cite{HRS1994}. Its definition is written as follows:

\begin{definition} \rm
A finite-dimensional algebra $\alg$ is said to be {\defines quasi-tilted} if $\gldim\alg \=< 2$ and for any indecomposable module $M$, one of $\pdim M \=< 1$ and $\idim M \=< 1$ holds.
\end{definition}

Equivalently, for a finite-dimensional algebra $\alg$ with $\gldim\alg \=< 2$, $\alg$ is quasi-tilted if and only if $\pdim M + \idim M \=< 3$ holds for all non-projective indecomposable modules and non-injective indecomposable modules. Immediately, We obtain the following result by Theorem \ref{thm:main 1}.

\begin{corollary} \label{coro:quasitilted}
Let $A$ be a gentle algebra with global dimension $2$. If one of the conditions \ref{thm-main:(1)} and \ref{thm-main:(2)} given in Theorem \ref{thm:main 1} holds, then $A$ is quasi-tilted.
\end{corollary}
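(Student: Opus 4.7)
The plan is to specialize Theorem \ref{thm:main 1} to the case $\gldim A = 2$ and then convert the resulting numerical bound into the dichotomy that defines a quasi-tilted algebra. Since by hypothesis $A$ satisfies one of the conditions \ref{thm-main:(1)} or \ref{thm-main:(2)}, Theorem \ref{thm:main 1} directly produces
\[
\Hbound A \=< 2\cdot\gldim A -1 = 3,
\]
so $\pdim M + \idim M \=< 3$ for every $M\in\ind(\modcat A)$.

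Next I would use this inequality, together with the extra information $\gldim A = 2$, to deduce that each indecomposable satisfies $\pdim M \=< 1$ or $\idim M \=< 1$. Split into cases according to whether $M$ is projective, injective, or neither. If $M$ is projective then $\pdim M = 0$, and symmetrically for injectives; in the remaining case $\pdim M, \idim M \in\{1,2\}$ (using $\gldim A = 2$), and the bound $\pdim M + \idim M \=< 3$ forces at least one of the two to equal $1$. Combined with $\gldim A\=< 2$, this verifies the definition of quasi-tilted recalled just above the statement.

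No serious obstacle is expected beyond invoking Theorem \ref{thm:main 1}; the proof amounts to the arithmetic identity $2\cdot\gldim A -1 = 3$ plus the observation that two positive integers summing to at most $3$ cannot both exceed $1$. The only point requiring a sentence of care is the equivalence used implicitly in the paper, namely that for $\gldim A\=<2$ the condition ``$\pdim M + \idim M \=<3$ for every non-projective, non-injective indecomposable $M$'' is equivalent to the quasi-tilted dichotomy, so that once Theorem \ref{thm:main 1} has been quoted the result follows by definition chasing.
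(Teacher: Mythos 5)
Your proposal is correct and follows essentially the same route as the paper's own proof: invoke Theorem \ref{thm:main 1} to get $\pdim M + \idim M \=< 2\cdot\gldim A - 1 = 3$, then observe that for a non-projective, non-injective indecomposable $M$ both dimensions lie in $\{1,2\}$, so the bound forces at least one of them to equal $1$. The explicit handling of projective and injective modules is a harmless addition; nothing further is needed.
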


\begin{proof}
For any non-projective and non-injective indecomposable module $M$, we obtain
\begin{center}
$\pdim M\=< \gldim A = 2$ and $\idim M \=< \gldim A = 2$.
\end{center}
By Theorem \ref{thm:main 1}, we get $\idim M + \pdim M \=< 2\cdot \gldim A -1 = 3$.
Thus, $\pdim M = 2$ admits $\idim M = 1$, and $\pdim M = 1$ admits $\idim M = 2$ as required.
\end{proof}

Similarly, we have a result by Theorem \ref{thm:main 2}.

\begin{corollary} \label{coro of thm-main 2}
Let $A$ be a gentle algebra with 
$\gldim A=\infty$ and 
$\fdim A=2$.
If one of the conditions \ref{thm-main 2:(1)} and \ref{thm-main 2:(2)} given in Theorem \ref{thm:main 2} holds, then for each indecomposable module $M \in \ind(\modcat A)$ with $\pdim M + \idim M < \infty$, at least one of $\pdim M \=<1$ and $\idim M \=<1$ holds.
\end{corollary}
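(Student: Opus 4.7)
The plan is to apply Theorem \ref{thm:main 2} directly and finish by an elementary arithmetic argument, in perfect parallel with the proof of Corollary \ref{coro:quasitilted}. Under either hypothesis \ref{thm-main 2:(1)} or \ref{thm-main 2:(2)}, and using $\fdim A = 2$, Theorem \ref{thm:main 2} immediately yields
\[
\Hbound A \=< 2\cdot \fdim A - 1 = 3.
\]
In particular, every indecomposable $M\in \ind(\modcat A)$ with $\pdim M + \idim M < \infty$ satisfies $\pdim M + \idim M \=< 3$.

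To conclude, I would argue by contradiction: if both $\pdim M \>= 2$ and $\idim M \>= 2$ held, then $\pdim M + \idim M \>= 4$, contradicting the bound above. Hence at least one of $\pdim M \=< 1$ or $\idim M \=< 1$ must hold, which is exactly the desired conclusion.

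The proof is essentially a one-line deduction from Theorem \ref{thm:main 2}, so there is no serious obstacle to overcome. The only subtle points---both already addressed inside Theorem \ref{thm:main 2}---are that in the $\gldim A = \infty$ regime one must replace $\gldim A$ by $\fdim A$ (which is finite and controlled by the Gei\ss--Reiten Theorem \ref{thm:GR2005}), and one must restrict attention to indecomposable modules of finite total homological dimension, since for modules outside this class the bound $\pdim M + \idim M \=< 3$ trivially fails.
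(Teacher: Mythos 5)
Your proposal is correct and matches the paper's argument: the paper simply declares the proof ``parallel to that of Corollary \ref{coro:quasitilted}'', which is exactly your deduction --- invoke the main theorem to get $\pdim M + \idim M \=< 2\cdot\fdim A - 1 = 3$ for the relevant indecomposables, then observe that both dimensions being $\>= 2$ would force the sum to be $\>= 4$. Your explicit restriction to modules of finite total homological dimension is the right (and only) subtlety, and it is handled the same way in the paper.
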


\begin{proof}
Its proof is parallel to that of Corollary \ref{coro:quasitilted}.
\end{proof}

\begin{remark} \rm
Corollary \ref{coro:quasitilted} is a method to judge a gentle algebra to be quasi-tilted,
and Corollary \ref{coro of thm-main 2} is parallel to Corollary \ref{coro:quasitilted}.
Moreover, it is well-known that a gentle algebra $A$ with finite finitistic dimension has a finite global dimension or an infinite global dimension, obviously, and if its global dimension $\gldim A$ is finite, then we have $\gldim A = \fdim A$.
Thus, naturally, Corollary \ref{coro of thm-main 2} admits Corollary \ref{coro:quasitilted},
since $\fdim A = 2$ follows that one of $\gldim A = 2$ and $\fdim A = 2 < \gldim A = \infty$ holds for $A$ to be gentle.
That is,
\begin{itemize}
  \item[] \hspace{0.6cm} {\it if the finitistic dimension $\fdim A$ of a gentle algebra $A$ is less than or equal to $2$, and the starting points of all maximal forbidden paths are strong sources or the ending points of all maximal forbidden paths are strong sinks,
then for each indecomposable module $M$ with $\pdim M + \idim M <+\infty$, we have
$\pdim M + \idim M < 2 \cdot \fdim A - 1$.}
\end{itemize}
The global dimension reflects the homology complexity of an algebra.
Then the fact, i.e., Corollary \ref{coro of thm-main 2} admitting Corollary \ref{coro:quasitilted}, provides a viewpoint that homology complexity can be described by finitistic dimension.
\end{remark}

\subsection{A sufficient and necessary condition} \label{subsect:suff and necess}

Coelho and Tosar provided a method to judge a gentle algebra to be quasi-tilted in \cite[Theorem 7]{CT2009} by using $\mathrm{hw}(\pmb{X}^{\bullet}):= \max\{t \in \ZZ\mid \HH^t(\pmb{X}^{\bullet})\ne 0\}
- \min\{t \in \ZZ\mid \HH^t(\pmb{X}^{\bullet})\ne 0\} + 1$ for each indecomposable object $\pmb{X}^{\bullet}$ in the derived category $D^b(A)$ of $A$.
In \cite[etc]{ZH2016, Zhang2019Indecomposables}, the authors call $\mathrm{hw}(\pmb{X}^{\bullet})$ the cohomological width of $\pmb{X}^{\bullet}$.

Let $\str$ be a string. We call it satisfies the {\defines quasi-tilted condition} if one of the following conditions holds.
\begin{enumerate}[label={\rm(Qt\arabic*)}]
  \item $\str$ is of the form
    $ \xymatrix{ \bullet \ar@{<~}[r]^{F_1} & \bullet \ar@{~}[r]^{\str} & \bullet \ar@{~>}[r]^{F_2} & \bullet } $
    in $(\Q,\I)$, where $F_1$ and $F_2$ are maximal forbidden paths of length $\=< 2$;
    \label{QT 1}
  \item $\str$ is of the form
    $ \xymatrix{ \bullet \ar@{~>}[r]^{F_1} & \bullet \ar@{~}[r]^{\str} & \bullet \ar@{<~}[r]^{F_2} & \bullet } $
    in $(\Q,\I)$, where $F_1$ and $F_2$ are maximal forbidden paths of length $\=< 2$;
    \label{QT 2}
  \item if there exists a maximal forbidden path $F_1$ of length two with $\t(F_1)=\s(\str)$,
    then for any forbidden path $F_2 = f_{21}f_{22}$ of length two,
    $\t(\str)\in \mathrm{v}(F_2)$ admits $\t(\str) = \t(f_{21})$,
    where $\mathrm{v}(F_2)$ is the set of all vertices of $F_2$;
    \label{QT 3}
  \item if there exists a maximal forbidden path $F_2$ of length two with $\s(F_2)=\t(\str)$,
    then for any forbidden path $F_1 = f_{11}f_{12}$ of length two,
    $\s(\str)\in \mathrm{v}(F_1)$ admits $\s(\str) = \t(f_{11})$,
    where $\mathrm{v}(F_1)$ is the set of all vertices of $F_1$.
    \label{QT 4}
\end{enumerate}

\begin{lemma} \label{lemm:QT}
Let $A=\kk\Q/\I$ be a quasi-tilted algebra. Then every string in $(\Q,\I)$ satisfies one of \ref{QT 1}, \ref{QT 2}, \ref{QT 3}, and \ref{QT 4}.
\end{lemma}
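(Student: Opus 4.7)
The plan is to argue by contrapositive: assuming $\str$ violates each of \ref{QT 1}--\ref{QT 4} while $A$ is quasi-tilted, I will exhibit an indecomposable $A$-module $M$ with $\pdim M + \idim M \geq 4$, contradicting that quasi-tiltedness forces $\pdim + \idim \leq 3$ on every indecomposable (since $\pdim \leq 1$ or $\idim \leq 1$, and $\gldim A \leq 2$). As $A$ is quasi-tilted, Theorem \ref{thm:LGH2024} gives $\gldim A \leq 2$, so all maximal forbidden paths in $(\Q, \I)$ have length at most two. Lemma \ref{lemm:Forbid} then supplies at most one left maximal and one right maximal forbidden path compatible with $\str$ at each endpoint, with lengths $u_{\Left}, d_{\Left}, u_{\Right}, d_{\Right} \in \{0, 1, 2\}$ that bound $\pdim \M(\str)$ and $\idim \M(\str)$ from below via Proposition \ref{prop:idim/pdim}.

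Next, I translate the joint failure of \ref{QT 1}--\ref{QT 4} into combinatorial data around $\s(\str)$ and $\t(\str)$. Failure of \ref{QT 1} and \ref{QT 2} rules out the two ``uniform'' pictures in which both endpoints of $\str$ are attached to purely outgoing (resp.\ purely incoming) maximal forbidden paths of length $\leq 2$, so at some endpoint a length-$2$ forbidden path of the opposite orientation must be present. Failure of \ref{QT 3} (resp.\ \ref{QT 4}) then excludes the ``middle-vertex escape'' at $\t(\str)$ (resp.\ $\s(\str)$), i.e., the configuration in which a length-$2$ forbidden path meets an endpoint of $\str$ only through its interior vertex --- a vertex without relation on the concatenated sequence in the sense of Proposition \ref{prop:idim/pdim} --- and therefore contributes nothing to the homological dimensions. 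Jointly, the four negations force the presence of length-$2$ forbidden paths at both endpoints of $\str$ which genuinely contribute to $\pdim$ and $\idim$ of a related string module.

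With the offending local structure pinned down, I construct $M$ either as $\M(\str)$ itself or as $\M(\str')$ for an extension $\str'$ of $\str$ obtained by pre/post-concatenating $\str$ with (parts of) the problematic length-$2$ forbidden paths or their formal inverses; admissibility of $\str'$ as a string follows from the gentle axioms \ref{G1}--\ref{G4}. Proposition \ref{prop:idim/pdim} then yields $\pdim M \geq 2$ and $\idim M \geq 2$, whence $\pdim M + \idim M \geq 4$, the desired contradiction. The main obstacle I foresee is the case analysis: \ref{QT 3} and \ref{QT 4} are the delicate conditions, and for each residual configuration one must exhibit the correct auxiliary string and verify both the vertex-without-relation hypothesis of Proposition \ref{prop:idim/pdim} and the string axioms, arguments which depend sensitively on whether the first and last arrows of $\str$ lie in $\Q_1$ or in $\Q_1^{-1}$. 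The boundary cases --- strings of length $0$ (simple modules) and strings whose module is projective or injective and thus automatically respect $\pdim + \idim \leq 3$ --- will require short, separate treatments before the main case analysis.
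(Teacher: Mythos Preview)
Your contrapositive strategy is sound, but it takes a genuinely different route from the paper's proof. The paper argues \emph{directly}: it first asserts that in any gentle pair, every string $\str$ already falls into one of four schematic configurations (the pictures (1)--(4) of \Pic~\ref{fig:QT str}), distinguished by whether each endpoint of $\str$ is a terminal vertex of a maximal forbidden path or an interior one. It then invokes only $\gldim A \leq 2$ (hence all maximal forbidden paths have length $\leq 2$ by Theorem~\ref{thm:LGH2024}) to force the length constraints in each picture, so that picture~$(k)$ collapses to condition~(Qt$k$). No auxiliary module is built, no extension $\str'$ is needed, and the quasi-tilted inequality $\pdim+\idim\leq 3$ is never explicitly used.

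Your approach, by contrast, trades the structural classification for a homological witness: you unpack the simultaneous failure of \ref{QT 1}--\ref{QT 4} into local data at the endpoints and then manufacture an $M$ (possibly $\M(\str')$ for an extended string) with $\pdim M+\idim M\geq 4$. This is correct in principle and has the merit of making the obstruction to quasi-tiltedness visible as a concrete module, but it is substantially longer: the case split over $a_1,a_m\in\Q_1$ versus $\Q_1^{-1}$, the verification of the string axioms for $\str'$, and the vertex-without-relation checks in Proposition~\ref{prop:idim/pdim} all have to be carried out, whereas the paper sidesteps every one of these by working at the level of the pictorial classification. In short, both arguments reach the goal; the paper's is a two-line reduction to a claimed exhaustive classification, while yours is a constructive contradiction that trades brevity for explicitness.
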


\begin{proof}
First of all, for a gentle pair $(\Q,\I)$, any string $\str$ in a gentle pair must be one of the forms (1) -- (4) shown in \Pic \ref{fig:QT str}, where in the case (1) or (2), $F_1$ and $F_2$ are unique, and in the case (3) or (4), $F_1'F_1''$ and $F_2'F_2''$.
\begin{figure}[htbp]
  \centering
\begin{tikzpicture}
\draw (0,0) node{$\xymatrix{ \bullet \ar@{<~}[r]^{F_1}
& v \ar@{~}[r]^{\str}
& w \ar@{~>}[r]^{F_2}
& \bullet }$};
\draw (0,-1) node{(1)};
\end{tikzpicture}
\ \
\begin{tikzpicture}
\draw (0,0) node{$\xymatrix{ \bullet \ar@{~>}[r]^{F_1}
& v \ar@{~}[r]^{\str}
& w \ar@{<~}[r]^{F_2}
& \bullet }$};
\draw (0,-1) node{(2)};
\end{tikzpicture}
\\
\begin{tikzpicture}
\draw (0,0) node{$\xymatrix{
\bullet \ar@{~>}[dd] \ar@{}[d]_{F_1'} && \bullet \ar@{~>}[dd] \ar@{}[d]^{F_2'}\\
 \bullet \ar@{~}[rr]^{\str} \ar@{}[d]_{F_1''}
&& \bullet \ar@{}[d]^{F_2''}  \\
\bullet && \bullet
}$};
\draw (0,-1.75) node{(3)};
\end{tikzpicture}
\ \ \ \ \
\begin{tikzpicture}
\draw (0,0) node{$\xymatrix{
\bullet \ar@{<~}[dd] \ar@{}[d]_{F_1''} && \bullet \ar@{~>}[dd] \ar@{}[d]^{F_2'}\\
 \bullet \ar@{~}[rr]^{\str} \ar@{}[d]_{F_1'}
&& \bullet \ar@{}[d]^{F_2''}  \\
\bullet && \bullet
}$};
\draw (0,-1.75) node{(4)};
\end{tikzpicture}
  \caption{\textsf{
  In (1) (resp., (2)), $F_1$ and $F_2$ are either maximal forbidden paths of length $>0$ or forbidden paths of length $0$, and $v$ and $w$ are vertices without relation on $F_1^{-1}\str F_2$ (resp., $F_1\str F_2^{-1}$); in (3) and (4), $F_1'$, $F_2'$, $F_1''$ and $F_2''$ are forbidden paths of length $\>=1$ such that $F_1'F_1''$ and $F_2'F_2''$ are maximal forbidden paths}}
  \label{fig:QT str}
\end{figure}
Next, assume that $A$ is quasi-tilted. Then, in the cases of (1) and (2) shown in \Pic \ref{fig:QT str},
the lengths of $F_1$ and $F_2$ must be $\=< 2$, since, by using Proposition \ref{prop:idim/pdim}, we have
\begin{align*}
 & \pdim \M(\str) = \max\{\ell(F_1),\ell(F_2)\} \=< \gldim A = 2 \\
~\Rightarrow~ & \pdim \M(\str) + \idim \M(\str) \=< 2+1=3
\end{align*}
and
\begin{align*}
  & \idim \M(\str) = \max\{\ell(F_1),\ell(F_2)\} \=< \gldim A = 2 \\
~\Rightarrow~ & \pdim \M(\str) + \idim \M(\str) \=< 1+2=3,
\end{align*}
respectively. It follows that $\str$ satisfies \ref{QT 1} if $\str$ is of the form shown in \Pic \ref{fig:QT str} (1),
or satisfies \ref{QT 2} if $\str$ is of the form shown in \Pic \ref{fig:QT str} (2).
One can check that if $\str$ is of the form shown in \Pic \ref{fig:QT str} (3) or (4),
then the lengths of $F_1'$, $F_2'$, $F_1''$, and $F_2''$ equal to $1$ by $\gldim A = 2$ and Proposition \ref{prop:idim/pdim}.
In this cases, $\str$ satisfies \ref{QT 3} or satisfies \ref{QT 4}.
\end{proof}

\begin{corollary} \label{coro:main 3}
Let $A=\kk\Q/\I$ be a gentle algebra with $\gldim A = 2$. Then $A$ is quasi-tilted if and only if
all strings in its bound quiver $(\Q,\I)$ satisfy one of the four conditions \ref{QT 1}--\ref{QT 4}.
\end{corollary}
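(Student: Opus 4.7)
The forward implication is exactly Lemma \ref{lemm:QT}, so only the converse requires work. The plan is to assume every string in $(\Q,\I)$ satisfies at least one of \ref{QT 1}--\ref{QT 4}, and then verify the quasi-tilted condition $\pdim M \=< 1$ or $\idim M \=< 1$ for each $M \in \ind(\modcat A)$.

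By the Wald--Waschb\"usch--Butler--Ringel classification (Theorem \ref{thm:WWBR}), an indecomposable module is either a string module or a band module. Proposition \ref{prop:band} disposes of the band case at once, since band modules have both projective and injective dimension equal to $1$, and the quasi-tilted inequality holds trivially. Hence the whole argument reduces to string modules $M = \M(\str)$, and the natural strategy is a case analysis on which of \ref{QT 1}--\ref{QT 4} the string $\str$ satisfies, combined with the dimension estimates provided by Lemma \ref{lemm:Forbid} and Proposition \ref{prop:idim/pdim}.

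In case \ref{QT 1}, the local configuration of Figure \ref{fig:QT str}(1) presents $F_1$ and $F_2$ as the only maximal forbidden paths attached to the endpoints of $\str$, both oriented outward in the sense of the sequence $F_1^{-1}\str F_2$. By Proposition \ref{prop:idim/pdim}(2) these paths contribute only to $\pdim M$, and their length bound $\=<2$ gives $\pdim M \=< 2$; since no left maximal forbidden path ends at $\s(\str)$ or $\t(\str)$ in this picture, Proposition \ref{prop:idim/pdim}(1) forces $\idim M \=< 1$. Case \ref{QT 2} is dual, yielding $\pdim M \=< 1$. In cases \ref{QT 3} and \ref{QT 4}, the endpoints of $\str$ sit interior to maximal forbidden paths of length two (Figure \ref{fig:QT str}(3)/(4)), and the conditional statements ``$\t(\str) = \t(f_{21})$'' and ``$\s(\str) = \t(f_{11})$'' force the corresponding endpoint to be a middle vertex of every length-two forbidden path it meets. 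Via Lemma \ref{lemm:Forbid} this splits the relevant maximal forbidden path into two length-one pieces, so the invariants $u_\Left, u_\Right, d_\Left, d_\Right$ of Proposition \ref{prop:idim/pdim} cannot simultaneously reach $2$ on the two sides of $\str$.

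The main obstacle will be the bookkeeping in \ref{QT 3} and \ref{QT 4}: one has to match the middle-vertex condition against the direction of the first/last arrow of $\str$ to decide which side contributes to $\pdim$ and which to $\idim$, and then verify that the middle-vertex hypothesis rules out the ``bad'' configuration in which both dimensions equal $2$. Once that bookkeeping is done, Proposition \ref{prop:idim/pdim} combined with Corollary \ref{coro:pdim+idim} yields $\pdim M + \idim M \=< 3$, and since $\gldim A = 2$ forces both $\pdim M, \idim M \=< 2$, at least one of them is $\=< 1$. This establishes the quasi-tilted property and completes the converse.
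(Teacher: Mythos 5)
Your proposal follows essentially the same route as the paper's proof: the forward direction is delegated to Lemma \ref{lemm:QT}, bands are dispatched by Proposition \ref{prop:band}, and the converse is a case analysis on \ref{QT 1}--\ref{QT 4} using Proposition \ref{prop:idim/pdim} to bound one of $\pdim\M(\str)$, $\idim\M(\str)$ by $2$ and force the other to be $\=<1$. The ``bookkeeping'' you defer is precisely what the paper's proof supplies, namely the explicit gentleness arguments (e.g.\ that an extra arrow $\alpha$ into $\s(\str)$ would make $\M(\str)$ injective in case \ref{QT 1}, and the configuration of \Pic \ref{fig:thm-main 3} in case \ref{QT 3}) showing the complementary dimension is at most $1$; your treatment of band modules is actually more explicit than the paper's.
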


\begin{proof}
Let $\str = a_1\cdots a_m$ be any string in $(\Q,\I)$.

If $\str$ satisfies \ref{QT 1}, then we have $\pdim\M(\str) = \max\{\ell(F_1), \ell(F_2)\} \=< 2$ by Proposition \ref{prop:idim/pdim}.
Next, we prove $\idim \M(\str) \=< 1$. To do this, we assume that $\idim \M(\str)\>= 2$, then at least one of the following two cases holds:
\begin{enumerate}[label={\rm(\arabic*)}]
  \item there is an arrow $\alpha$ which is not the first arrow $f_1$ of $F_1$ such that $\t(\alpha)=\s(F_1)~(=\s(\str))$;
    \label{main 3:case 1}
  \item there is an arrow $\beta$ which is not the first arrow $g_1$ of $F_2$ such that $\t(\beta)=\s(F_2)~(=\t(\str))$.
    \label{main 3:case 2}
\end{enumerate}

In \ref{main 3:case 1}, if $a_1\in\Q_1^{-1}$, then one of $\alpha f_1\in \I$ and $a_1^{-1}f_1\in\I$ holds since $A$ is a gentle algebra. It contradicts with $F_1$ is a maximal forbidden path. Thus, $a_1\in\Q_1$, then $\alpha a_1 \in \I$ by using $A$ to be gentle.
Similarly, in \ref{main 3:case 2}, we have $a_m\in\Q_1^{-1}$ and $\beta a_m^{-1} \in \I$.
Then we obtain that $\M(\str)$ is injective, it contradicts with $\idim \M(\str)\>= 2$.
The case of $\str$ satisfying \ref{QT 2} is similar.

If $\str$ satisfies \ref{QT 3}, and there exists a maximal forbidden path $F_1=f_{11}f_{12}$ ($f_{11}, f_{12}\in\Q_1$) of length two with $\t(F_1)=\s(\str)$,
then for any forbidden path $F_2$ with $\t(\str)\in \mathrm{v}(F_2)$,
where $\mathrm{v}(F_2)$ is the set of all vertices of $F$, we have $\ell(F_2) \=< 2$ by Theorem \ref{thm:LGH2024}.
If $F_2$ is maximal forbidden and $\ell(F_2)\=< 1$, then $\str$ satisfies \ref{QT 1}, We talked about this situation in the previous text.
If $\ell(F_2)=2$, then $F_2$ must be a maximal forbidden path which is of the form $F_2=f_{21}f_{22}$ ($f_{21},f_{22}\in\Q_1$) and $\t(\str) = \t(f_{21})$ holds.
In this case, we have $\idim \M(\str) = \ell(F_1) = 2$ by Proposition \ref{prop:idim/pdim}.
Next, we prove $\pdim \M(\str) \=< 1$. To do this, we assume $\pdim \M(\str) \>= 2$.
Since $A$ is gentle, one can check that there exists an arrow $\alpha$ with $\s(\alpha) = \s(\str)$ ($=\t(F_1)$) such that
$f_{12}\alpha \notin \I$, $a_1\in\Q_1^{-1}$, and $a_1^{-1}\alpha\in \I$ hold, see \Pic \ref{fig:thm-main 3}.
\begin{figure}[htbp]
  \centering
\begin{tikzpicture}
\draw[cyan!25][line width=10pt][shift={(0.1,-0.78)}] (-2.5,0)--(2.5,0) ;
\draw (0,0) node{
\xymatrix@C=1.25cm@R=1.25cm{
 \bullet \ar[d]^{f_{11}} \ar@{.}@/_1pc/[dd] & & &  \\
 \bullet \ar[d]^{f_{12}} & & & \bullet \ar[d]_{f_{21}} \ar@{.}@/^1pc/[dd] \\
 \bullet \ar@{}[rrr]^{\color{cyan}\str} \ar[d]_{\alpha} & \ar@{.}[ld] \ar@{~}[rr] \ar[l]^{a_1^{-1}} &  & \bullet \ar[d]_{f_{22}} \\
 \bullet  & & & \bullet
}
};
\end{tikzpicture}
  \caption{\textsf{$a_1^{-1}\alpha\in \I$, where $\str=a_1\cdots a_m$}}
  \label{fig:thm-main 3}
\end{figure}
Then we obtain $\pdim \M(\str) = \ell(F')$ by Proposition \ref{prop:idim/pdim} where $F'$ is a forbidden path with $\s(F')=\t(\str)$ whose length $\ell(F')$ is less than or equal to $2$. By using \ref{QT 3}, $\ell(F')$ must equal to $1$. Thus, $\pdim \M(\str) = 1$, a contradiction.
The case of $\str$ satisfying \ref{QT 4} is similar.

Therefore, we have $\pdim \M(\str)+\idim \M(\str) \=< 3$ for any string module $\M(\str)$.
It follows that $A$ is quasi-tilted if $\gldim A = 2$.
Moreover, Lemma \ref{lemm:QT} implies that the inverse of the above result holds; therefore, we complete this proof.
\end{proof}

\section{Examples} \label{sect:examp}

We provide some examples in this section.

\subsection{Examples for Theorems \ref{thm:main 1} and \ref{thm:main 2}}

Now we give an example to illustrate Theorems \ref{thm:main 1}.

\begin{example} \rm
Let $A = \kk\Q/\I$ be a gentle algebra given by $\Q = $
\[
\xymatrix{
  1' \ar[r]^{b}
& 1 \ar[r]^{a_1'} \ar[d]_{a_1}
& 2 \ar[d]^{a_2}
& 2' \ar[l]_{c} \\
& 3 \ar[r]_{a_3}
& 4
& }
\]
\[ \I = \langle ba_1, ca_2 \rangle. \]
Then the projective dimensions of all indecomposable modules are $1$ except $S(1')$ and $S(2')$.
Since $S(1')$ and $S(2')$ are injective, and $\gldim A = 2$ (by using Theorem \ref{thm:LGH2024}),
we have \[\pdim M + \idim M \=< 1+2 = 3 = 2\cdot \gldim A - 1\]
for any indecomposable module $M \notin \{S(1'), S(2')\}$ (up to isomorphism).
Moreover, one can check $\pdim S(1')=\pdim S(2')=2$, then we obtain $\pdim S(1')+\idim S(1') = \pdim S(2')+\idim S(2')=2 < 2\cdot \gldim A - 1$.
Thus, $A$ is a representation-infinite gentle algebra that conforms Theorem \ref{thm:main 1}.
\end{example}

Now we give another example to illustrate Theorem \ref{thm:main 2}.

\begin{example} \rm
Let $A=\kk\Q/\I$ be a gentle algebra given by $\Q=$
\begin{figure}[H]
  \centering
\begin{tikzpicture}[scale=0.5]
\foreach \x in {0,120,240}
\draw[->][rotate around = {10+\x:(0,0)}][line width=1pt] (2,0) arc(0:100:2);
\draw (2,0) node{$1$} (-1*1,1.73*1) node{$2$} (-1*1,-1.73*1) node{$3$};
\draw (4,0) node{$4$} (-1*2,1.73*2) node{$5$} (-1*2,-1.73*2) node{$6$};
\draw (6,0) node{$7$} (-1*3,1.73*3) node{$8$} (-1*3,-1.73*3) node{$9$};
\foreach \x in {0,120,240}
\draw[->][rotate around = {\x:(0,0)}][line width=1pt] (3.7,0) -- (2.3,0);
\foreach \x in {0,120,240}
\draw[->][rotate around = {\x:(0,0)}][line width=1pt] (5.7,0) -- (4.3,0);
\draw[rotate around = { 60:(0,0)}] (2.4,0) node{$a_{12}$};
\draw[rotate around = {180:(0,0)}] (2.4,0) node{$a_{23}$};
\draw[rotate around = {300:(0,0)}] (2.4,0) node{$a_{31}$};
\draw[rotate around = {  0:(0,0)}] (3,0.5) node{$a_{41}$};
\draw[rotate around = {120:(0,0)}] (3,0.5) node{$a_{52}$};
\draw[rotate around = {240:(0,0)}] (3,0.5) node{$a_{63}$};
\draw[rotate around = {  0:(0,0)}] (5,0.5) node{$a_{74}$};
\draw[rotate around = {120:(0,0)}] (5,0.5) node{$a_{85}$};
\draw[rotate around = {240:(0,0)}] (5,0.5) node{$a_{96}$};
\foreach \x in {0,120,240}
\draw[line width=1pt][dotted][red][rotate around = {\x:(0,0)}] (1.85,-0.5) arc(-90:-270:0.5);
\foreach \x in {0,120,240}
\draw[line width=1pt][dashed][violet][rotate around = {\x:(0,0)}] (3,0) arc(-180:0:1);
\end{tikzpicture}
\end{figure}
\noindent
and {$\I = \langle a_{12}a_{23}, a_{23}a_{31}, a_{31}a_{12},
  a_{74}a_{41}, a_{85}a_{52}, a_{96}a_{63} \rangle$}.
Then one can check that all indecomposable modules with finite projective dimension and injective dimension are listed as follows.
\begin{align*}
 & S(4)= \bsm 4 \esm
&& S(5)= \bsm 5 \esm
&& S(6)= \bsm 6 \esm \\
 & S(7)= \bsm 7 \esm = E(7)
&& S(8)= \bsm 8 \esm = E(8)
&& S(9)= \bsm 9 \esm = E(9) \\
 & \M(a_{41}a_{31}^{-1}) = \bsm 4&&3\\&1& \esm
&& \M(a_{63}a_{23}^{-1}) = \bsm 6&&2\\&3& \esm
&& \M(a_{52}a_{12}^{-1}) = \bsm 5&&1\\&2& \esm \\
 & P(1)= \bsm 1 \\2 \esm
&& P(2)= \bsm 2 \\3 \esm
&& P(3)= \bsm 3 \\1 \esm \\
 & P(4)= \bsm 4 \\1 \\2 \esm
&& P(5)= \bsm 5 \\2 \\3 \esm
&& P(6)= \bsm 6 \\3 \\1 \esm \\
 & P(7)= \bsm 7\\4 \esm = E(4)
&& P(8)= \bsm 8\\5 \esm = E(5)
&& P(9)= \bsm 9\\6 \esm = E(6) \\
 & E(1) = \bsm 6&&\\3&&4\\&1& \esm
&& E(2) = \bsm 4&&\\1&&5\\&2& \esm
&& E(3) = \bsm 5&&\\2&&6\\&3& \esm
\end{align*}
The projective resolution and injective resolution of $S(4)$ are
\[ 0 \To{} P(1) \To{} P(4) \To{} S(4) \To{} 0 \]
and
\[ 0 \To{} S(4) \To{} E(4) \To{} E(7) \To{} 0,\]
respectively. Then we have $\pdim S(4) = 1 = \idim S(4)$.
Similarly, we have $\pdim S(5) = 1 = \idim S(5)$ and $\pdim S(6) = 1 = \idim S(6)$.
The projective resolution and injective resolution of $\M(a_{41}a_{31}^{-1}) = \bsm 4&&3\\&1& \esm$ are
\[ 0 \To{} P(1) \To{} P(4)\oplus P(3) \To{} \M(a_{41}a_{31}^{-1}) \To{} 0  \]
and
\[ 0 \To{} \M(a_{41}a_{31}^{-1}) \To{} E(4) \To{} E(7) \To{} 0, \]
respectively. Then we have $\pdim \M(a_{41}a_{31}^{-1}) = 1 = \idim \M(a_{41}a_{31}^{-1})$.
Similarly, we have $\pdim \M(a_{63}a_{23}^{-1}) = 1 = \idim \M(a_{63}a_{23}^{-1})$
and $\pdim \M(a_{52}a_{12}^{-1}) = 1 = \idim \M(a_{52}a_{12}^{-1})$.
Notice that we have $\fdim A = \idim A = \pdim D(A) = 2$ in this example,
then we have \[\displaystyle 
\Hbound A
= 2 < 2\cdot \fdim A -1 = 3.\]
\end{example}

\subsection{An Example for Corollary \ref{coro:main 3}}

We give an example to illustrate Corollary \ref{coro:main 3}.

\begin{example} \rm
Let $A=\kk\Q/\I$ be a gentle algebra given by $\Q=$
\begin{figure}[H]
  \centering
\begin{tikzpicture}[scale=0.5]
\foreach \x in {0,120,240}
\draw[->][rotate around = {10+\x:(0,0)}][line width=1pt] (2,0) arc(0:100:2);
\foreach \x in {0,120,240}
\draw[->][rotate around = {5+\x:(0,0)}][line width=1pt] (6,0) arc(0:110:6);
\draw (2,0) node{$1$} (-1*1,1.73*1) node{$2$} (-1*1,-1.73*1) node{$3$};
\draw (4,0) node{$4$} (-1*2,1.73*2) node{$5$} (-1*2,-1.73*2) node{$6$};
\draw (6,0) node{$7$} (-1*3,1.73*3) node{$8$} (-1*3,-1.73*3) node{$9$};
\foreach \x in {0,120,240}
\draw[->][rotate around = {\x:(0,0)}][line width=1pt] (3.7,0) -- (2.3,0);
\foreach \x in {0,120,240}
\draw[->][rotate around = {\x:(0,0)}][line width=1pt] (5.7,0) -- (4.3,0);
\draw[rotate around = { 60:(0,0)}] (2.4,0) node{$a_{12}$};
\draw[rotate around = {180:(0,0)}] (2.4,0) node{$a_{23}$};
\draw[rotate around = {300:(0,0)}] (2.4,0) node{$a_{31}$};
\draw[rotate around = { 60:(0,0)}] (6.4,0) node{$a_{78}$};
\draw[rotate around = {180:(0,0)}] (6.4,0) node{$a_{89}$};
\draw[rotate around = {300:(0,0)}] (6.4,0) node{$a_{97}$};
\draw[rotate around = {  0:(0,0)}] (3,0.5) node{$a_{41}$};
\draw[rotate around = {120:(0,0)}] (3,0.5) node{$a_{52}$};
\draw[rotate around = {240:(0,0)}] (3,0.5) node{$a_{63}$};
\draw[rotate around = {  0:(0,0)}] (5,0.5) node{$a_{74}$};
\draw[rotate around = {120:(0,0)}] (5,0.5) node{$a_{85}$};
\draw[rotate around = {240:(0,0)}] (5,0.5) node{$a_{96}$};
\foreach \x in {0,120,240}
\draw[line width=1pt][dotted][red][rotate around = {\x:(0,0)}] (1.85,-0.5) arc(-90:-270:0.5);
\foreach \x in {0,120,240}
\draw[line width=1pt][dotted][blue][rotate around = {\x:(0,0)}] (5.95,-1) arc(-90:90:1);
\foreach \x in {0,120,240}
\draw[line width=1pt][dashed][violet][rotate around = {\x:(0,0)}] (3,0) arc(-180:0:1);
\end{tikzpicture}
\end{figure}
\noindent
and {$\I = \langle a_{12}a_{23}, a_{23}a_{31}, a_{31}a_{12},
  a_{78}a_{89}, a_{89}a_{97}, a_{97}a_{78},
  a_{74}a_{41}, a_{85}a_{52}, a_{96}a_{63} \rangle$}.
Then we have $\fdim A = 2$. We can find $39$ indecomposable modules (up to isomorphism) by using the Auslander--Reiten quiver of $A$, and then one can check that all indecomposable modules with finite projective dimension and injective dimension are listed as follows.
\begin{align*}
 & S(4)= \bsm 4 \esm
&& S(5)= \bsm 5 \esm
&& S(6)= \bsm 6 \esm \\
 & P(1) = \bsm 1\\2 \esm
&& P(2) = \bsm 2\\3 \esm
&& P(3) = \bsm 3\\1 \esm\\
 & P(4) = \bsm 4\\1\\2 \esm
&& P(5) = \bsm 5\\2\\3 \esm
&& P(6) = \bsm 6\\3\\1 \esm \\
 & P(7) = \bsm & 7 & \\ 4 & & 8 \\ & & 5 \esm
&& P(8) = \bsm & 8 & \\ 5 & & 9 \\ & & 6 \esm
&& P(9) = \bsm & 9 & \\ 6 & & 7 \\ & & 4 \esm \\
 & E(1) = \bsm 6&&\\3&&4\\&1& \esm
&& E(2) = \bsm 4&&\\1&&5\\&2& \esm
&& E(3) = \bsm 5&&\\2&&6\\&3& \esm\\
 & E(4) = \bsm 9\\7\\4 \esm
&& E(5) = \bsm 7\\8\\5 \esm
&& E(6) = \bsm 8\\9\\6 \esm\\
 & E(7) = \bsm 9\\7 \esm
&& E(8) = \bsm 7\\8 \esm
&& E(9) = \bsm 8\\9 \esm \\
 & \bsm 3&&4\\&1& \esm
&& \bsm 1&&5\\&2& \esm
&& \bsm 2&&6\\&3& \esm\\
 & \bsm &7&\\4&&8 \esm
&& \bsm &8&\\5&&9 \esm
&& \bsm &9&\\6&&7 \esm
\end{align*}
For any indecomposable projective (resp., injective) module $P$ (resp., $E$), we have $\idim P \=< 2$ (resp., $\pdim E \=< 2$) since $A$ is a Gorenstein algebra whose self-injective dimension is $2$.
For the simple module $S(4)$, we have that its projective resolution and injective resolution are
\[ 0 \To{} P(1) \To{} P(4) \To{} S(4) \To{} 0 \]
and
\[ 0 \To{} S(4) \To{} E(4) \To{} E(7) \To{} 0, \]
then $\pdim S(4)=1= \idim S(4)$. We have $\pdim S(5) = 1 = \idim S(5)$ and $\pdim S(6) = 1 = \idim S(6)$ by the symmetry of $(\Q,\I)$.
For the indecomposable $\bsm 3&&4\\&1& \esm$, we have its projective resolution and injective resolution are
\[ 0 \To{} P(1) \To{} P(3)\oplus P(4) \To{} \bsm 3&&4\\&1& \esm \To{} 0 \]
and
\[ 0 \To{} \bsm 3&&4\\&1& \esm \To{} E(1) \To{} E(6) \To{} E(9) \To{} 0, \]
then $\pdim \left( \bsm 3&&4\\&1& \esm \right)_A = 1$ and $\idim \left( \bsm 3&&4\\&1& \esm \right)_A = 2$,
i.e.,
\begin{center}
  $\pdim \left( \bsm 3&&4\\&1& \esm \right)_A + \idim \left( \bsm 3&&4\\&1& \esm \right)_A =1+2= 3$.
\end{center}
We have
\begin{center}
  $\pdim \left( \bsm 1&&5\\&2& \esm \right)_A + \idim \left( \bsm 1&&5\\&2& \esm \right)_A =1+2= 3$
\end{center}
\begin{center}
$\pdim \left( \bsm 2&&6\\&3& \esm \right)_A + \idim \left( \bsm 2&&6\\&3& \esm \right)_A =1+2= 3$
\end{center}
\begin{center}
$\pdim \left( \bsm &7&\\4&&8 \esm \right)_A + \idim \left( \bsm &7&\\4&&8 \esm \right)_A =2+1= 3$
\end{center}
\begin{center}
$\pdim \left( \bsm &8&\\5&&9 \esm \right)_A + \idim \left( \bsm &8&\\5&&9 \esm \right)_A =2+1= 3$
\end{center}
and
\begin{center}
$\pdim \left( \bsm &9&\\6&&7 \esm \right)_A + \idim \left( \bsm &9&\\6&&7 \esm \right)_A =2+1= 3$
\end{center}
by the symmetry of $(\Q,\I)$.
\end{example}


\subsection{Some counter examples}

The following two examples show that if a gentle pair $(\Q,\I)$ does not satisfy the conditions \ref{thm-main:(1)} and \ref{thm-main:(2)} given in Theorem \ref{thm:main 1}, then $\idim M + \pdim M = 2\cdot \gldim A$ may hold.
Here, the gentle algebra given in Example \ref{examp:1} (1) is representation-finite, i.e., the number of isoclasses of indecomposable modules is finite, and the gentle algebra given in Example \ref{examp:1} (2) is representation-infinite.

\begin{example} \rm \label{examp:1}
(1) Let $(\Q,\I)$ be a gentle pair with
\begin{center}
  $\Q = ~
  \xymatrix{1 \ar[r]^a & 2 \ar[r]^b & 3 \ar[r]^c & 4 \ar[r]^d & 5}$ and
  $\I = \langle ab, cd\rangle$.
\end{center}
Then $A=\kk\Q/\I$ is representation-finite.
The starting point $3$ of the maximal forbidden path $cd$ is not a strong source of $\Q$,
and we have $\pdim S(3)+\idim S(3)=2+2=4 > 2\cdot \gldim A-1 =3$. Here, one can check that $\gldim A=2$.

(2) Let $(\Q,\I)$ be a gentle pair with
\begin{center}
$\Q = \xymatrix{ 1 \ar[r]^{a} & 2 \ar@/^.5pc/[r]^{b_1} \ar@/_.5pc/[r]_{b_2} & 3 \ar[r]^c & 4 }$ and $\I = \langle ab_1, b_2c \rangle$.
\end{center}
Then $A=\kk\Q/\I$ is representation-infinite by using Theorem \ref{thm:WWBR}.
Consider the string module $M:=\M(b_2b_1^{-1}b_2) \cong ({_3}{^2}{_3}{^2})_A \in \ind(\modcat A)$, we have its projective resolution
\[ 0 -\!\!\!\to
P(4)
\To{}
P(3)^{\oplus 2} \oplus P(4)=
\left(
\begin{smallmatrix}
 3 \\ 4
\end{smallmatrix}
\right)^{\oplus 2}
\oplus
\begin{smallmatrix}
 4
\end{smallmatrix}
\To{}
P(2)^{\oplus 2} =
\left(
\begin{smallmatrix}
 & 2 & \\ 3 & & 3 \\ 4 & &
\end{smallmatrix}
\right)^{\oplus 2}
\To{} M
-\!\!\!\to 0 \]
and its injective resolution
\[
0 -\!\!\!\to M
\To{}
E(3)^{\oplus 2} =
\left(
\begin{smallmatrix}
 & & 1 \\ 2 & & 2 \\ & 3 &
\end{smallmatrix}
\right)^{\oplus 2}
\To{}
E(2)^{\oplus 2} \oplus E(1) =
\left(
\begin{smallmatrix}
 1 \\ 2
\end{smallmatrix} \oplus
\begin{smallmatrix}
 1
\end{smallmatrix}
\right)^{\oplus 2}
\To{}
E(1)
-\!\!\!\to 0.
\]
Then we have $\pdim M + \idim M = 4 > 2\cdot\gldim A - 1 = 3$, where $\gldim A = 2$ can be checked by computing all projective dimensions of simple modules or can be given by Theorem \ref{thm:LGH2024}.
\end{example}

However, there exist some examples for gentle algebras not satisfying the conditions \ref{thm-main:(1)} and \ref{thm-main:(2)} given in Theorem \ref{thm:main 1} such that $ 
 \Hbound A \=< 2\cdot \gldim A -1$ holds, see the following example.

\begin{example} \label{examp:3} \rm
(1) Let $(\Q,\I)$ be a gentle pair with
\begin{center}
  $\Q = ~
  \xymatrix{1 \ar[r]^a & 2 \ar[r]^b & 3 \ar[r]^c & 4 \ar[r]^d & 5}$ and
  $\I = \langle bc\rangle$.
\end{center}
Then $A$ is a representation-finite gentle algebra that does not satisfy two conditions \ref{thm-main:(1)} and \ref{thm-main:(2)} given in Theorem \ref{thm:main 1}.
We have $\gldim A = 2$, and for any indecomposable module $M$, we have $\pdim M + \idim M \=< 3 = 2\cdot\gldim A-1$.

(2) Let $A=\kk\Q/\I$ be a gentle algebra given by
\begin{center}
    $\Q = ~
  \xymatrix{1 \ar@/^0.6pc/[r]^{a_1} \ar@/_0.6pc/[r]_{a_2}
    & 2 \ar[r]^b & 3 \ar[r]^c & 4 \ar@/^0.6pc/[r]^{d_1} \ar@/_0.6pc/[r]_{d_2}  & 5}$ and
  $\I = \langle bc\rangle$,
\end{center}
then $A$ is a representation-infinite algebra does not satisfy two conditions \ref{thm-main:(1)} and \ref{thm-main:(2)} given in Theorem \ref{thm:main 1}. We have $\gldim A = 2$, and for any indecomposable module $M$, we have $\pdim M + \idim M \=< 3 = 2\cdot\gldim A-1$.
\end{example}

%

\section*{Authors' contributions}

\addcontentsline{toc}{section}{Authors' contributions}

All authors contributed equally to this work.

\section*{Acknowledgements}

\addcontentsline{toc}{section}{Acknowledgements}

\section*{Data Availability}

\addcontentsline{toc}{section}{Data Availability}

Data sharing is not applicable to this article as no datasets were generated or analysed
during the current study.


\addcontentsline{toc}{section}{References}

   \bibliographystyle{alpha} 

 \bibliography{referLiu20250515}

\end{document}